\DeclareMathOperator{\tails}{tails}
\DeclareMathOperator{\seq}{seq}
\DeclareMathOperator{\map}{map}
\title{Rational $\zz_p$-Equivariant Spectra}
\author{David Barnes}
\date{November 26, 2010}
\begin{document}
\maketitle

\begin{abstract}
\noindent
We find a simple algebraic model for rational $\zz_p$-equivariant spectra, via
a series of Quillen equivalences. This model, along with an Adams short exact sequence,
will allow us to easily perform constructions and calculations.
\end{abstract}

\pdfbookmark[1]{Contents}{toc}
\tableofcontents
\newpage

\section{Introduction}
Spaces with an action of a topological group $G$ are of great interest to a wide range of mathematicians. A particularly useful set of tools for studying these spaces are $G$-equivariant cohomology theories. To study these cohomology theories, it is helpful to understand the $G$-spectra that represent them. The homotopy theory of spectra is extremely complex, as demonstrated by the stable homotopy groups of spheres. A great deal of this difficulty comes from torsion, hence it is common to work rationally. This corresponds to studying cohomology theories which take values in rational vector spaces.
The study of equivariant spectra up to homotopy is even more demanding, so it is of even greater importance to rationalise in this case. Rationalising preserves much of the interesting behaviour coming from the group, so we are left with a tractable problem - understanding the rational homotopy theory of $G$-spectra - whose solution would be useful in a number of different areas.

A solution to this problem would be twofold, firstly we would want an abelian category $\acal(G)$, called the algebraic model and an equivalence between the homotopy category of $dg \acal(G)$ and 
the homotopy category of rational $G$-spectra. We would also like the algebraic model to capture
homotopical structures like homotopy limits or colimits, so we ask that our solution consists
of a series of Quillen equivalences between the model category of rational $G$-spectra and the model category $dg \acal(G)$. This is analogous to how a derived equivalence of rings gives an equivalence
of their derived categories and preserves more homological structure than just having an equivalence of derived categories. The algebraic model should be explicit and manageable so that constructing objects or maps is straightforward.

Secondly we want to be able to calculate maps in the homotopy category of rational $G$-spectra using the algebraic model. The Quillen equivalences provide us with an Adams spectral sequence relating maps in the homotopy category of $dg \acal(G)$ to maps in the homotopy category of rational spectra. Studying the algebraic model should give us information on this spectral sequence and help with its calculation.

These aims have been completed for finite groups, through the work of Greenlees and May in \cite[Appendix A]{gremay95}, Schwede and Shipley in \cite[Example 5.1.2]{ss03stabmodcat} and these were improved to the level of a monoidal Quillen equivalence in \cite{barnesfinite}. Greenlees and Shipley have been studying the case of a torus: \cite{gre99}, \cite{shi02} and \cite{greshi}. The case of $O(2)$ has also been largely completed in \cite{gre98a} and \cite{barnesdihedral}.

In this paper we fix $G$ to be $\zz_p$, the group of $p$-adic integers. We give a series of Quillen equivalences between rational $\zz_p$-spectra and an algebraic model $\acal(\zz_p)$. Furthermore the Adams spectral sequence in this case takes the form of a short exact sequence, which makes the model particularly suited for calculations.  We also give relate  $\acal(\zz_p)$
to the algebraic model for $\zz_p/p^n$ and $p^n \zz_p$ via algebraic versions of restriction and inflation. This makes it easy to see how these important equivariant functors behave homotopically.

The group $\zz_p$ is a profinite group: a projective limit of finite groups. Such groups occur particularly often in algebraic geometry, algebraic $K$-theory, number theory and chromatic homotopy theory. They are interesting to study, as they have a non-trivial topology, but (as with a finite groups) have all their homotopical information concentrated in degree zero - a key fact for proving the Quillen equivalences that we require. We present in this paper a first step towards classifying rational equivariant cohomology theories for profinite groups by starting with the canonical example - the $p$-adic integers. We can think of this paper as taking the known cases of rational $\zz/p^n$-spectra and taking the inverse limit over all $n$.
With this viewpoint, a particularly exciting prospect is that the methods of this paper could be applied to the task of understanding rational equivariant cohomology theories for general compact Hausdorff groups, all of which occur as inverse limits of compact Lie groups.

Now we introduce the algebraic model for rational $\zz_p$-equivariant spectra, this is
definition \ref{def:algmodel}.
An object $M$ of $\acal(\zz_p)$ consists of a collection of $\qq[\zz_p/p^k]$-modules $M_k$ for $k \geqslant 0$ and a discrete $\zz_p$-module $M_\infty$, with a specified map of $\zz_p$-modules $$\sigma_M \co M_\infty \to  \colim_n \prod_{k \geqslant n} M_k$$
called the \textbf{structure map} of $M$.
A map $f \co M \to N$ in this category is then a collection of $\qq[\zz_p/p^k]$-module maps $f_k \co M_k \to N_k$ and a map of $\zz_p$-modules $f_\infty \co M_\infty \to N_\infty$ such that the square below commutes
$$
\xymatrix{
M_\infty
\ar[r]^{\sigma_M} \ar[d]_{f_\infty} &
\colim_n \prod_{k \geqslant n} M_k
\ar[d]_{[(f_k)]} \\
N_\infty
\ar[r]^{\sigma_M} &
\colim_n \prod_{k \geqslant n} N_k
}
$$

We also obtain an Adams short exact sequence, theorem \ref{thm:adams}.
The inputs to this sequence are the rational homotopy groups of a spectrum $X$,
which give a graded object $\underline{\pi}_*^\acal(\Sigma X)$ of $\acal(\zz_p)$,
via definition \ref{def:homotopyMackey}. Let $[X,Y]_*^{\zz_p}$ be the set of maps in the
homotopy category of rational $\zz_p$-spectra, then there is a short exact sequence
$$
0 \longrightarrow
\ext^\acal_* (\underline{\pi}_*^\acal(\Sigma X), \underline{\pi}_*^\acal(Y))
\longrightarrow
[X,Y]_*^{\zz_p}
\longrightarrow
\hom_\acal(\underline{\pi}_*^\acal(X), \underline{\pi}_*^\acal(Y))
\longrightarrow 0.
$$

The algebraic model for $\zz_p$ is also worth studying in its own right. It is more intricate than those that occur for finite groups, for example, the injective dimension is non-zero.  Furthermore, the algebraic model we obtain in this case is strikingly similar to the algebraic model of \cite{barnesdihedral}, we discuss this in detail in section \ref{sec:relations}.

For technical reasons the Quillen equivalences we have provided are not monoidal functors,
if one were to construct a category of $\zz_p$-spectra in terms of EKMM $S$-modules, it would
be a simple matter to adjust this paper to show that the algebraic model also captures
the monoidal structure of rational $\zz_p$-spectra. With this in hand one can use the algebraic model to study ring spectra and modules over them.

\subsection*{Organisation}

We start by introducing the group $\zz_p$, the model category of $\zz_p$-spaces and the model category of $\zz_p$-spectra in section \ref{sec:ratspectra}. Next we study the rational Burnside ring of $\zz_p$ in section \ref{sec:burnside}. Once this ring is known, we can define the algebraic model $\acal$ and study its properties in section \ref{sec:model}. We show how our algebraic model is equivalent to the category of rational Mackey functors in section \ref{sec:mackey}. We use this to obtain an Adams short exact sequence in section \ref{sec:adams}. The proof that there is a Quillen equivalence between the model category of differential graded objects in $\acal$ and the model category of rational $\zz_p$-spectra is given in section \ref{sec:proof}. We finish by considering the relation of the algebraic model for $\zz_p$ to those for the groups $p^n \zz_p$ and $\zz/p^n$ in section \ref{sec:relations}, where we also compare $\acal$ to the algebraic model of \cite{barnesdihedral}.

\subsection*{Acknowledgements}
Part of this work was completed at the University of Western Ontario (where the author was partially supported by an NSF grant), the remainder was completed at the University of Sheffield (where the author was supported by the EPSRC grant
EP/H026681/1). The author would like to thank the members of the mathematics departments of both universities for their advice, support and assistance.

\section{A model category for rational \texorpdfstring{$\zz_p$}{Z\_p}-spectra}\label{sec:ratspectra}
Let $p$ be a prime and let $\zz/p^n$ denote the integers modulo $p^n$, for $n \geqslant 0$. Let $proj_n \co \zz/p^{n} \to \zz/p^{n-1}$ denote the projection map. The inverse limit of these projection maps is $\zz_p$, the $p$-adic integers, this group comes with maps $\sigma_n \co \zz_p \to \zz/p^n$. The set $\zz_p$ is topologised with the inverse limit topology, so a set $U$ is open in $\zz_p$ if and only if it is a union of sets of the form $\sigma_n^{-1}(U_n)$, where $U_n$ is an open set in $\zz/p^n$. This makes $\zz_p$ into a compact, Hausdorff and totally disconnected space.

The open subgroups of a profinite group are precisely the closed subgroups of finite index. The set of open subgroups of $\zz_p$is the collection of subgroups of form $p^n \zz_p$, for $n \geqslant 0$. The closed subgroups of $\zz_p$ are the open subgroups and the trivial group.

In order to study $\zz_p$-spectra we use \cite{fauskeqpro}, which constructs
a model category of equivariant orthogonal spectra for compact Hausdorff groups. This construction is a generalisation of
\cite{mm02}. We will then localise this model structure to obtain a model category of rational $\zz_p$-spectra. When working with equivariant spectra, we have to choose some collection of subgroups that we are interested in, different choices will change the model category (and the homotopy category) that we obtain. With compact Lie groups, it is common to consider the collection of all closed subgroups, whereas with profinite groups, it is usual to consider the collection of all open subgroups.
the following result comes from \cite[Proposition 2.11]{fauskeqpro}.

\begin{proposition}
There is a cofibrantly generated proper model structure on the category of based $\zz_p$-spaces with weak equivalences those maps $f$ such that $f^H$ is a weak equivalence of spaces, for all open subgroups $H$. Fibrations are those maps $f$ such that $f^H$ is a fibration of spaces for all open subgroups $H$. This model structure is denoted $\zz_p \tscr$.
\end{proposition}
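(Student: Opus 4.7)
My strategy would be to invoke Kan's recognition theorem for cofibrantly generated model categories, transferring the standard Quillen model structure on based spaces across the adjunction between $\zz_p \tscr$ and $\prod_H \mathrm{Top}_*$ (the product indexed over open subgroups $H$), whose right adjoint sends $Y$ to $(Y^H)_H$ and whose left adjoint sends $(X_H)_H$ to $\bigvee_H \zz_p/H_+ \wedge X_H$. The open subgroups of $\zz_p$ are exactly the $p^n \zz_p$ for $n \geqslant 0$, so there are countably many of them, and each orbit $\zz_p/H$ is the finite discrete set $\zz/p^n$. I would then take as generating cofibrations and generating acyclic cofibrations the sets
$$I = \{\zz_p/H_+ \wedge (S^{n-1}_+ \hookrightarrow D^n_+) : H \text{ open},\ n \geqslant 0 \}$$
and
$$J = \{\zz_p/H_+ \wedge ((D^n \times \{0\})_+ \hookrightarrow (D^n \times [0,1])_+) : H \text{ open},\ n \geqslant 0 \}.$$

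Next I would verify Kan's conditions in turn. Smallness of the sources of $I$ and $J$ is immediate, since every finite $\zz_p$-CW complex is compact and hence small with respect to closed $T_1$-inclusions. The defining adjunction then identifies maps with the right lifting property against $J$ (respectively $I$) as those $f$ for which every fixed-point map $f^H$ is a Serre fibration (respectively an acyclic Serre fibration) of based spaces, pinning down the classes of fibrations and acyclic fibrations as stated. The two-out-of-three property and closure under retracts of the weak equivalences descend from based spaces via $(-)^H$. The non-trivial acyclicity condition, that every relative $J$-cell complex is a weak equivalence, I would establish by showing that each $(-)^H$ commutes with the pushouts and transfinite compositions that build such a cell complex.

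The principal obstacle is precisely this commutation property. For an arbitrary closed subgroup of a general compact Hausdorff group it can fail, but for open subgroups it holds because $(\zz_p/K)^H$ is a finite discrete set for every pair of open subgroups $H, K$ of $\zz_p$. Consequently, attaching a $J$-cell along $\zz_p/K_+$ becomes, on $H$-fixed points, a finite disjoint union of ordinary cell attachments in based spaces; sequential colimits are then handled by the standard fact that transfinite compositions of acyclic cofibrations in based spaces are acyclic cofibrations. This is exactly the step where the restriction to open (rather than merely closed) subgroups is essential.

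Finally, properness is inherited from the Quillen model structure on based spaces by passage to each system of $H$-fixed points: cofibrations in $\zz_p \tscr$ are built from free cells along open orbits, so $(-)^H$ sends their pushouts to pushouts along cofibrations of based spaces, giving left properness, and a dual argument using preservation of pullbacks by $(-)^H$ yields right properness. Together these verifications produce a cofibrantly generated proper model structure on $\zz_p \tscr$ with the stated weak equivalences and fibrations, matching Fausk's construction.
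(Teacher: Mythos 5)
Your argument is correct and is the standard transfer (Kan recognition) approach. The paper itself does not prove this proposition but simply cites \cite[Proposition 2.11]{fauskeqpro}; Fausk's construction there follows exactly the pattern you describe, namely transferring the Quillen model structure along the fixed-points/induction adjunction, with the key acyclicity verification reducing, as you observe, to the fact that $(\zz_p/K)^H$ is a finite discrete set for open $H,K$ so that attaching an orbit cell commutes with taking $H$-fixed points.
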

The generating cofibrations $I_{\zz_p \tscr}$ and acyclic cofibrations $J_{\zz_p \tscr}$ are as below.
$$I_{\zz_p \tscr}= \{ (\zz_p/p^n )_+ \smashprod i | i \in I_{\tscr}, n \geqslant 0 \}$$
$$J_{\zz_p \tscr}= \{ (\zz_p/p^n )_+ \smashprod  j | j \in J_{\tscr}, n \geqslant 0 \}$$


\begin{definition}
A \textbf{$\zz_p$-universe} $U$ is a countable infinite direct sum $U = \oplus_{i=1}^\infty U'$
of real $\zz_p$-inner product spaces $U'$, such that:
\begin{enumerate}
\item $\rr \subset U'$  (a canonical choice of the trivial representation)
\item $U$ is topologised as the union of all finite-dimensional $\zz_p$-subspaces of $U$
(each with the norm topology)
\item the $\zz_p$-action on each finite dimensional $G$-subspaces $V$ of
$U$ factors through a compact Lie group.
\end{enumerate}
Such an object $U$ is said to be \textbf{complete} if
every finite dimensional irreducible representation is contained (up to isomorphism) within $U$.
\end{definition}

A complete universe always exists, one can be obtained by taking all irreducible representations and taking the sum of each of these countably infinitely many times. This definition of a universe is \cite[Definition 3.1]{fauskeqpro} and below we give \cite[Definition 4.1]{fauskeqpro}.

\begin{definition}
A \textbf{$\pi_*$-equivalence} of orthogonal $\zz_p$-spectra is a map $f$ such that
$\pi_n^H(f)$ is an isomorphism for each open subgroup $H$.
\end{definition}

By \cite[Theorem 4.4]{fauskeqpro} we have a model structure on
$\zz_p$-spectra which behaves well with respect to the smash product of spectra.

\begin{theorem}
Let $U$ be a complete $\zz_p$-universe, then the category of orthogonal $\zz_p$-spectra
$\zz_p \Sp$ is a compactly generated, proper, monoidal model category satisfying the monoid axiom.
\end{theorem}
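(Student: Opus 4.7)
The plan is to invoke Fausk's construction wholesale (this is literally \cite[Theorem 4.4]{fauskeqpro}); since the excerpt has already stated that his framework works for any compact Hausdorff group and $\zz_p$ is compact Hausdorff, all that is really needed is a verification that the hypotheses of that paper are met for our choice of $U$. So the body of the ``proof'' would simply record that $\zz_p$ is compact Hausdorff, that $\zz_p \tscr$ satisfies the model-categorical hypotheses needed in \cite{fauskeqpro}, and that $U$ has been chosen complete; everything else follows.

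If one wanted to reproduce the argument from scratch, I would follow the pattern of \cite{mm02}, adapted to the profinite setting. First I would build the \emph{level} model structure on $\zz_p \Sp$, with weak equivalences (resp.\ fibrations) those maps $f$ such that $f(V) \co X(V) \to Y(V)$ is a weak equivalence (resp.\ fibration) in $\zz_p \tscr$ for every finite-dimensional $\zz_p$-subspace $V \subset U$. The generating (acyclic) cofibrations are $F_V i$ and $F_V j$, where $i \in I_{\zz_p \tscr}$, $j \in J_{\zz_p \tscr}$ and $F_V$ is the left adjoint to evaluation at $V$; compact generation is inherited from $\zz_p \tscr$. Next I would Bousfield-localise by the usual maps $\lambda_{V,W} \co F_{V \oplus W} S^W \to F_V S^0$ (for $V, W$ running over finite-dimensional $\zz_p$-subspaces of $U$) to pass from level equivalences to $\pi_*$-equivalences.

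For the monoidal axioms, the pushout-product axiom reduces by cofibrant generation to computing $F_V i_1 \pprod F_W i_2$, which is a map of the form $F_{V \oplus W}(\text{something})$ built from pushout-products in $\zz_p \tscr$; one uses that $\zz_p \tscr$ is itself monoidal, and that smashing with $(\zz_p/p^n)_+$ is well-behaved because the orbits are finite discrete sets whose products remain orbits of finite discrete $\zz_p$-sets. Properness: left properness is checked on generating cofibrations using that $\zz_p \tscr$ is left proper; right properness follows because $\pi_*^H$ commutes with pullbacks along level fibrations.

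The main obstacle is the \textbf{monoid axiom}: one must show that every transfinite composition of pushouts of smash products $X \smashprod j$ (with $j$ a generating acyclic cofibration and $X$ arbitrary) is a $\pi_*$-equivalence. This is exactly the step where one needs the profinite structure of $\zz_p$ to behave like the finite-group case. The key observation is that because every open subgroup $p^n \zz_p$ has finite index and the orbits $\zz_p/p^n$ are finite discrete, the calculation of $\pi_*^H$ of a smash product reduces, after restricting to a finite quotient, to the known finite-group calculation of \cite{mm02}. Fausk carries this out in \cite[Section 4]{fauskeqpro}, and I would simply transcribe his argument, noting that the passage to colimits over $n$ causes no difficulty because $\pi_*^H$ for $H$ open commutes with the relevant filtered colimits.
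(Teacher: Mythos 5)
Your primary strategy---simply citing \cite[Theorem 4.4]{fauskeqpro} after noting that $\zz_p$ is compact Hausdorff and $U$ is complete---is exactly what the paper does; the theorem is recorded as a direct consequence of Fausk's result with no further proof given. Your additional sketch of how one would reconstruct the argument from scratch along the lines of \cite{mm02} is reasonable background but goes beyond what the paper itself supplies.
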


The model category $\zz_p \Sp$ is a $\zz_p \tscr$-model category, that is, it is enriched, tensored and cotensored
over $\zz_p \tscr$ in a manner compatible with the model structures, see \cite[Definition 4.2.18]{hov99}.

\begin{lemma}
The stable homotopy group functor, $\pi_n^{p^k \zz_p}$ is
co-represented by the spectrum $\Sigma^\infty (\zz_p/p^k)_+ \smashprod S^n$ for $n \geqslant 0$ and $\Sigma^\infty_{\rr^{-n}} (\zz_p/p^k)_+$ for negative $n$.
\end{lemma}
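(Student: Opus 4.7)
The plan is to combine two standard adjunctions: the sphere/shift adjunction, and the induction--restriction adjunction for the inclusion of the open (hence closed) subgroup $p^k \zz_p \hookrightarrow \zz_p$. The restriction functor $\mathrm{res} \co \zz_p \Sp \to p^k \zz_p \Sp$ admits a left adjoint, the induced spectrum functor $\zz_p \ltimes_{p^k \zz_p} (-)$; when applied to an orthogonal $p^k \zz_p$-spectrum whose underlying object carries trivial $p^k \zz_p$-action, such as $\Sigma^\infty S^n$ or $\Sigma^\infty_{\rr^{-n}} S^0$, this induction is naturally isomorphic to smashing with $(\zz_p/p^k)_+$, since the orbit $\zz_p \times_{p^k \zz_p} \{*\}$ is $\zz_p/p^k$.

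For $n \geqslant 0$, I would chain the adjunction with the definition of $\pi_n^H$:
$$[\Sigma^\infty (\zz_p/p^k)_+ \smashprod S^n, X]^{\zz_p}_0 \;\cong\; [\Sigma^\infty S^n, \mathrm{res} X]^{p^k \zz_p}_0 \;=\; \pi_n^{p^k \zz_p}(X).$$
For negative $n$, exactly the same argument works after replacing $\Sigma^\infty S^n$ by the shift desuspension spectrum $\Sigma^\infty_{\rr^{-n}} S^0$; this corepresents the negative-degree homotopy group by the standard orthogonal-spectrum identity $\Sigma^\infty_{\rr^m} A \simeq \Sigma^{-m} \Sigma^\infty A$ in the stable category, and the induction calculation of the first paragraph then identifies the induced spectrum as $\Sigma^\infty_{\rr^{-n}} (\zz_p/p^k)_+$.

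The main obstacle is verifying that these corepresenting objects are cofibrant in Fausk's $\pi_*$-equivalence model structure, so that $[-,X]^{\zz_p}$ is an honest derived mapping set. This is immediate from the explicit generating cofibrations recalled after the previous proposition: $(\zz_p/p^k)_+$ is one of the preferred orbits in $I_{\zz_p \tscr}$, and its image under the left Quillen functor $\Sigma^\infty_V$ (for any $\zz_p$-representation $V$) lies among the generating cofibrations of $\zz_p \Sp$ by the construction in \cite{fauskeqpro}. Once cofibrancy is confirmed, the adjunction isomorphisms above descend to the homotopy category and the lemma follows.
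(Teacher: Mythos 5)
The paper does not supply a proof for this lemma: it is stated as a direct citation of \cite[Lemma 4.6]{fauskeqpro}, so there is no ``paper's approach'' to compare against. Your proposal reconstructs the standard argument, and the structure is sound: factor $\pi_n^{p^k\zz_p}$ through the restriction--induction adjunction for the finite-index open subgroup $p^k\zz_p \hookrightarrow \zz_p$, identify the induced spectrum of a trivial $p^k\zz_p$-spectrum as the smash with $(\zz_p/p^k)_+$, and keep track of cofibrancy so that the adjunction isomorphism descends to the homotopy category. The cofibrancy check is correct: $(\zz_p/p^k)_+$ is cofibrant because $\ast \to (\zz_p/p^k)_+$ is a generating cofibration of $\zz_p\tscr$, and the shift-desuspension functors $\Sigma^\infty_V$ are left Quillen, so they carry it to a cofibrant spectrum.

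One point worth flagging explicitly: your displayed chain ends with the identity $[\Sigma^\infty S^n, \mathrm{res}\, X]^{p^k\zz_p}_0 = \pi_n^{p^k\zz_p}(X)$, which you treat as the definition of $\pi_n^H$. In Fausk's setup (as in \cite{mm02}) the homotopy groups $\pi_n^H$ are defined directly as colimits over representations of levelwise homotopy groups of $X$, \emph{not} as derived homotopy classes of maps out of $\Sigma^\infty S^n$. The assertion that the sphere (or its shift-desuspension for $n<0$) corepresents $\pi_n^G$ in the homotopy category is precisely the $k=0$ case of the lemma, and is a nontrivial feature of the stable model structure established in the construction of the category. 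So your argument is really a reduction of the general $k$ to the case $k=0$, which you should cite as an input rather than treat as a tautology. With that one caveat, the proposal is a correct and reasonable substitute for the omitted proof.
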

Thus the objects $\Sigma^\infty (\zz_p/p^k)_+$ (usually written as $\zz_p/p^k_+$) for $k  \geqslant 0$ are a set of generators for the homotopy category. This result is \cite[Lemma 4.6]{fauskeqpro} and the next is \cite[Proposition 7.10]{fauskeqpro}.

Segal-tom Dieck splitting also holds in this model category and is central to our computations.
\begin{proposition}\label{prop:splitting}
If $Y$ is a based $\zz_p$-space, then there is an isomorphism
of abelian groups
$$\bigoplus_{k \geqslant 0}
\pi_* \left( \Sigma^\infty \left(
(E \zz_p/p^k_+ \smashprod_{\zz_p/ p^k}
 Y^{p^k \zz_p}
\right)
\right)
\to \pi_*^{\zz_p}(\Sigma^\infty Y).
$$
\end{proposition}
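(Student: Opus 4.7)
My plan is to prove the isomorphism by cellular induction on $Y$, reducing to the case of orbit cells $(\zz_p/p^n)_+$, and then appealing to a change-of-groups argument together with the classical Segal-tom Dieck splitting for the finite quotients $\zz/p^m$.

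First I would verify that both sides of the proposed map are homology theories on based $\zz_p$-CW complexes: each sends cofibre sequences to long exact sequences, wedges to direct sums, and commutes with filtered colimits along cofibrations. For the right-hand side this is inherited from the corresponding properties of $\pi_*^{\zz_p} \circ \Sigma^\infty$. For the left-hand side it comes from the fact that for each open subgroup $p^k\zz_p$ the fixed-point functor $(-)^{p^k\zz_p}$ preserves these constructions (which is formal since $p^k\zz_p$ is open), together with the observation that $E\zz_p/p^k_+ \smashprod_{\zz_p/p^k} (-)$ is a homotopy-colimit construction respecting the same structure. In view of the cellular description of $\zz_p\tscr$ via the generating cofibrations $I_{\zz_p\tscr} = \{(\zz_p/p^n)_+ \smashprod i\}$, it then suffices to verify the formula when $Y = (\zz_p/p^n)_+$.

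For such a $Y$ the action factors through $\zz/p^n$, and the fixed points $Y^{p^k\zz_p}$ reduce to the basepoint for $k<n$ and equal all of $Y$ for $k\geqslant n$. Each non-trivial summand on the left computes to $\pi_*(\Sigma^\infty B(p^n\zz_p/p^k\zz_p)_+)$: $\zz_p/p^k$ acts transitively on $\zz_p/p^n$ with stabiliser $p^n\zz_p/p^k\zz_p$, so the standard identification $EG_+ \smashprod_G (G/H)_+ \simeq BH_+$ gives $E\zz_p/p^k_+ \smashprod_{\zz_p/p^k} (\zz_p/p^n)_+ \simeq B(p^n\zz_p/p^k\zz_p)_+$. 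After the reindexing $j = k-n$ the left-hand side becomes $\bigoplus_{j\geqslant 0} \pi_*(\Sigma^\infty B\zz/p^j_+)$. The right-hand side can be brought to the same form by a Wirthm\"uller-type change-of-groups identification $\pi_*^{\zz_p}(\Sigma^\infty(\zz_p/p^n)_+) \cong \pi_*^{p^n\zz_p}(\Sigma^\infty S^0)$ combined with the topological isomorphism $p^n\zz_p \cong \zz_p$, leaving only the base case $Y = S^0$.

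The hard part is this base case, namely computing $\pi_*^{\zz_p}(\Sigma^\infty S^0)$ and matching it term-by-term with $\bigoplus_{k\geqslant 0} \pi_*(\Sigma^\infty B\zz/p^k_+)$. I would attack it via isotropy separation: for the family $\mathcal{F}_k = \{p^j\zz_p : j \geqslant k\}$ of small open subgroups, the cofibre sequence $(E\mathcal{F}_k)_+ \to S^0 \to \widetilde{E\mathcal{F}_k}$ isolates the contribution from isotropy $p^k\zz_p$, and the quotient at each finite stage is controlled by the classical Segal-tom Dieck splitting for $\zz/p^k$. Passing to the colimit in $k$ then assembles the full splitting. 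The main subtlety is ensuring that the filtered colimit over $k$ behaves correctly with the stable homotopy groups; this ultimately rests on the profinite nature of $\zz_p$, since by the definition of a $\zz_p$-universe every finite-dimensional representation factors through a finite quotient, so equivariant data at the finite levels $\zz/p^k$ completely controls the global $\zz_p$-equivariant homotopy.
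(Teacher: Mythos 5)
The paper itself does not prove this proposition; it cites it directly as \cite[Proposition 7.10]{fauskeqpro}, so there is no internal proof to compare yours against. What you have written is therefore an independent reconstruction, and it should be judged on its own merits.

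Your reduction steps are sound. Both sides are indeed equivariant homology theories (the fixed-point functors for open subgroups, the $E\zz_p/p^k_+\smashprod_{\zz_p/p^k}(-)$ construction, and $\pi_*\Sigma^\infty$ all preserve cofibre sequences, wedges, and filtered colimits along cofibrations), so cellular induction over the generating cofibrations $(\zz_p/p^n)_+\smashprod i$ legitimately reduces the claim to $Y=(\zz_p/p^n)_+$. The computation of the left-hand side for an orbit cell --- the vanishing of $Y^{p^k\zz_p}$ for $k<n$, the identification of $Y^{p^k\zz_p}$ with a transitive $\zz_p/p^k$-set with stabiliser $p^n\zz_p/p^k\zz_p$ for $k\geqslant n$, and the resulting $B\zz/p^{k-n}_+$ --- is correct. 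The Wirthm\"uller/adjunction identification $\pi_*^{\zz_p}((\zz_p/p^n)_+)\cong\pi_*^{p^n\zz_p}(S^0)$ together with $p^n\zz_p\cong\zz_p$ correctly collapses all orbit cells to the $S^0$ case, provided you also check (you do not, but it is routine) that the splitting map itself is carried to the splitting map under these identifications.

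Where the proposal falls short is exactly where you say the hard part is. The base case --- identifying $\pi_*^{\zz_p}(\Sigma^\infty S^0)$ with $\bigoplus_{k\geqslant 0}\pi_*(\Sigma^\infty B\zz/p^k_+)$ --- is the entire content of Fausk's Proposition 7.10 once the reductions are made, and your treatment of it is a plan rather than a proof. Two concrete gaps. First, a single cofibre sequence $(E\mathcal{F}_k)_+\to S^0\to\widetilde{E\mathcal{F}_k}$ does not ``isolate the contribution from isotropy $p^k\zz_p$''; isolating a single isotropy type needs a pair of adjacent families, here $\mathcal{F}_{k+1}\subset\mathcal{F}_k$, and the associated spectrum $E\mathcal{F}_{k+}\smashprod\widetilde{E\mathcal{F}_{k+1}}$. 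Second, and more seriously, the assertion that ``equivariant data at the finite levels $\zz/p^k$ completely controls the global $\zz_p$-equivariant homotopy'' is precisely the theorem you are trying to prove, restated; it requires showing that a suitable filtered colimit of $\zz/p^k$-equivariant homotopy groups (with explicitly identified transition maps, corresponding to inflations) computes $\pi_*^{\zz_p}$, and that this commutes with the infinite-wedge decomposition coming from the finite-group Segal--tom Dieck theorem. This is the step the paper outsources to Fausk (compare also the subsequent Lemma in the text, cited as \cite[Lemma 7.11]{fauskeqpro}, which is a special case of exactly this colimit statement), and it cannot be dispatched by invoking the shape of the universe alone. Unless you carry out that colimit identification in detail, the argument does not close.
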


The splitting result gives an additive description of $[S,S]^{\zz_p}_* \pi_*^{\zz_p} (S^0)$,
we also need a multiplicative description in degree zero, so we give
\cite[Lemma 7.11]{fauskeqpro}.

\begin{lemma}
The ring of self maps of the sphere spectrum in the homotopy category of $\zz_p$-spectra is
naturally isomorphic to
$\colim_{k \geqslant 0} [S,S]^{\zz/p^k}.$
\end{lemma}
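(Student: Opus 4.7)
The strategy is to assemble inflation maps into the desired isomorphism and verify it using the Segal-tom Dieck splitting of Proposition~\ref{prop:splitting}.

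First, I would construct the map. For each $k$, the quotient $q_k \co \zz_p \to \zz/p^k$ lets us regard a $\zz/p^k$-spectrum as a $\zz_p$-spectrum on which $p^k \zz_p$ acts trivially; this inflation functor is symmetric monoidal and preserves the sphere, giving a ring homomorphism $\inf_k \co [S,S]^{\zz/p^k} \to [S,S]^{\zz_p}$. Compatibility with the tower $\zz/p^k \to \zz/p^{k-1}$ then yields the ring map $\Phi \co \colim_k [S,S]^{\zz/p^k} \to [S,S]^{\zz_p}$ which we wish to show is an isomorphism.

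Next I would identify both sides additively. Proposition~\ref{prop:splitting} applied to $Y = S^0$ gives
$$[S,S]^{\zz_p} \;\cong\; \bigoplus_{k \geqslant 0} \pi_0\bigl(\Sigma^\infty B(\zz_p/p^k)_+\bigr) \;\cong\; \bigoplus_{k \geqslant 0} \zz,$$
since each classifying space $B(\zz/p^k)$ is path connected. The analogous splitting for the finite group $\zz/p^k$ identifies $[S,S]^{\zz/p^k}$ with the Burnside ring $A(\zz/p^k) \cong \bigoplus_{j=0}^{k}\zz$, indexed by the subgroups $p^j\zz/p^k$ with Weyl groups $\zz/p^j$. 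Since the preimage of $p^j\zz/p^k$ under $q_k$ is $p^j\zz_p$, whose Weyl group in $\zz_p$ is again $\zz/p^j$, the map $\inf_k$ should carry the $j$-th summand of $A(\zz/p^k)$ isomorphically onto the $j$-th summand of the $\zz_p$-splitting. Granted this, the colimit transitions are the obvious inclusions of the first $k+1$ summands into the first $k+2$, so $\Phi$ becomes the identification $\bigoplus_{j \geqslant 0}\zz \cong \bigoplus_{k \geqslant 0}\zz$ term by term, hence is an isomorphism of abelian groups. Because each $\inf_k$ is a ring map, so is $\Phi$.

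The main obstacle is the compatibility statement highlighted above: that inflation along $q_k$ is compatible with the Segal-tom Dieck splittings on the two sides, in the sense that the summand of the $\zz/p^k$-splitting indexed by a subgroup $H$ maps via the identity on $\pi_0^s(BW(H)_+)$ into the summand of the $\zz_p$-splitting indexed by $q_k^{-1}(H)$. I expect this to follow from naturality of the fixed-point and classifying-space constructions, together with the fact that $q_k$ identifies the Weyl groups $W_{\zz/p^k}(p^j\zz/p^k)$ and $W_{\zz_p}(p^j\zz_p)$, but it requires careful tracking of the universe conventions and the free contractible spaces appearing in the splitting formula.
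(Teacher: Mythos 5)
The paper does not give an internal proof of this lemma: it is imported verbatim from the reference as \cite[Lemma 7.11]{fauskeqpro}, so there is no proof in the paper itself to compare your attempt against. Your plan is, however, consistent with the surrounding text, which says explicitly that the Segal--tom Dieck splitting supplies the \emph{additive} description of $[S,S]^{\zz_p}$ while this lemma supplies the \emph{multiplicative} one; you recover the ring structure from the inflation maps and then verify the isomorphism additively via the splitting, which is a sensible division of labour.

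The one genuine gap is the one you flag yourself, namely compatibility of inflation with the two Segal--tom Dieck decompositions. It closes cleanly without any universe or $EG$-level bookkeeping once you identify the degree-zero splitting with the transitive-set basis. For a finite group $G$ the generator of the $H$-summand $\pi_0(\Sigma^\infty BW_G(H)_+)\cong\zz$ is sent by the transfer to the class $[G/H]\in A(G)\cong\pi_0^G(S^0)$, so the splitting is exactly $A(G)=\bigoplus_{(H)}\zz\cdot[G/H]$; the same holds for $\zz_p$ by Proposition~\ref{prop:splitting}, with the $k$th summand generated by $[\zz_p/p^k\zz_p]$. Since inflation along $q_k$ takes $[\zz/p^k/H]$ to $[\zz_p/q_k^{-1}(H)]$ on the level of $G$-sets, it sends the $H$-summand isomorphically onto the $q_k^{-1}(H)$-summand, which is precisely what you need, and the colimit transitions $A(\zz/p^{k-1})\to A(\zz/p^k)$ are the evident inclusions for the same reason. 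One caution: phrase this matching in terms of transitive-set generators rather than in terms of the idempotents $e_j$, since the orthogonal idempotent decomposition of $A(G)$ only exists after rationalisation, whereas this lemma is stated integrally. It is also worth noting that an alternative and arguably shorter route, likely close to the cited proof, is to argue from compactness of the sphere spectrum so that every self-map and every homotopy is supported on a finite stage and hence factors through some $\zz/p^k$; this bypasses the splitting entirely.
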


One could repeat this entire construction using $p^n \zz_p$ as the group, choosing the set of subgroups to be those of form $p^k \zz_p$ for $k \geqslant n$, this would give a model category of $p^n \zz_p$-spectra. We mention this now as later we will need to move between this model category and the model category of $\zz_p$-spectra, using change of groups functors. The inclusion $i_n \co p^n \zz_p \to \zz_p$ induces a Quillen pair ($(\zz_p)_+ \smashprod_{p^n \zz_p} (-), i^*_n)$ between the model categories of $p^n \zz_p$-spectra and $\zz_p$-spectra (this also works in the space-level setting). We also have an inflation - fixed points Quillen adjunction between
$\zz_p$-spectra and $\zz/p^n$-spectra, coming from the projection $\sigma_n \co \zz_p \to \zz/p^n$.

Now we turn to the task of localising this model structure to obtain a
model category of rational $\zz_p$-spectra. We do so in a standard manner:
Bousfield localisation. Following \cite{barnesfinite} we introduce a
rational sphere spectrum $S^0 \qq$, designed so that the Bousfield localisation of
orthogonal $\zz_p$-spectra at this spectrum gives a model category whose
weak equivalences are precisely those maps which induce isomorphisms
on all rationalised homotopy groups.

The construction of $S^0 \qq$ is as follows, let $F = \oplus_{q \in \qq} \zz$ and let $R$ be the kernel of the map $F \to \qq$ which sends $1$ in factor $q$ to $q$. Thus we have a free resolution of $\qq$ as a $\zz$-module: $0 \to R \to F \to \qq$. We can realise the map $R \to F$ as a map of $\zz_p$-spectra: $\bigvee_R S^0 \to \bigvee_F S^0$ and take the cofibre which we call $S^0 \qq$.

\begin{theorem}
Let $U$ be a complete $\zz_p$-universe, then the model category of rational orthogonal $\zz_p$-spectra, $\zz_p \Sp_\qq$, is a compactly generated, proper, monoidal model category satisfying the monoid axiom.
\end{theorem}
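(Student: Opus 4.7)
The plan is to obtain $\zz_p \Sp_\qq$ as a left Bousfield localisation of $\zz_p \Sp$ at the object $S^0 \qq$, and then check that the four listed properties (compactly generated, proper, monoidal, monoid axiom) survive this localisation. Since $\zz_p \Sp$ is compactly generated, proper, and monoidal with the monoid axiom, and since $S^0 \qq$ is a cofibre of a map between cofibrant objects, Hirschhorn's localisation machinery immediately produces a new cofibrantly generated (in fact compactly generated, using that $\zz_p \Sp$ is already so) proper model structure whose weak equivalences are the $S^0 \qq$-local equivalences. The first step is to identify these with the rational $\pi_*$-isomorphisms: this follows by smashing with the explicit cofibre sequence $\bigvee_R S^0 \to \bigvee_F S^0 \to S^0 \qq$ and using that the generators $\zz_p/p^k_+$ are small, exactly as in \cite{barnesfinite}.

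Next I would address properness. Left properness is automatic from Hirschhorn, since it is preserved by left Bousfield localisation of a left proper model category. Right properness does not come for free in general, but in the rational setting one can argue as follows: the fibrations of $\zz_p \Sp_\qq$ between fibrant objects coincide with the $\pi_*$-fibrations of $\zz_p \Sp$ between rationally fibrant objects, and the pullback of a rational equivalence along such a fibration is a rational equivalence because $(-) \otimes \qq$ is exact on $\pi_*^H$ for every open $H$. This mirrors the argument in \cite[Section 4]{barnesfinite}.

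The main obstacle is promoting the localisation to a monoidal model category satisfying the monoid axiom, since left Bousfield localisations are not in general monoidal. For this I would follow the template of \cite{barnesfinite}: it suffices to check the pushout-product axiom on generating cofibrations and generating acyclic cofibrations, and the monoid axiom for maps built from these. The generating cofibrations of $\zz_p \Sp_\qq$ are the same as those of $\zz_p \Sp$, so the pushout-product of two cofibrations is unchanged. The non-trivial case is pushout-product with a generating acyclic cofibration of the localised structure, which is either an old generating acyclic cofibration or a horn on the map $\bigvee_R S^0 \to \bigvee_F S^0 \to S^0 \qq$ smashed with $(\zz_p/p^k)_+$ and a sphere. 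For these, smashing with a cofibrant object preserves rational $\pi_*$-equivalences because $(-) \smashprod X$ commutes with the construction of $S^0 \qq$ up to homotopy, so the pushout-product still lands in the rational equivalences. The monoid axiom is then checked by the same kind of argument: any transfinite composition of pushouts of rational equivalences smashed with an arbitrary spectrum remains a rational equivalence, since rationalisation of homotopy groups commutes with filtered colimits and cobase change along $\pi_*$-cofibrations.

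Once the pushout-product and monoid axioms are in hand, together with the (automatic) compact generation from Hirschhorn, properness as above, and the fact that the unit $S^0 \qq$ is cofibrant by construction, all four conclusions of the theorem hold. I expect that essentially every step is a routine transcription of \cite{barnesfinite} from a finite group to $\zz_p$; the only place where one must be careful is in verifying that rational $\pi_*$-isomorphism for open subgroups $H$ of $\zz_p$ is preserved by smashing with cofibrant spectra, and this follows because the generators $\zz_p/p^k_+$ are dualisable and the Segal--tom Dieck splitting (Proposition \ref{prop:splitting}) reduces questions about $\pi_*^{p^k \zz_p}$ to non-equivariant ones, where rationalisation is exact.
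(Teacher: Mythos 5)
Your proposal takes exactly the route the paper intends: the paper itself gives no proof of this theorem, merely constructing $S^0\qq$ and stating that one Bousfield-localises $\zz_p\Sp$ at it ``following \cite{barnesfinite}''; your sketch of identifying $S^0\qq$-local equivalences with rational $\pi_*$-isomorphisms, then checking properness, the pushout-product axiom, and the monoid axiom via the template of that reference is precisely that argument carried out. The only wording I would tighten is the phrase about ``$(-)\smashprod X$ commuting with the construction of $S^0\qq$ up to homotopy''; the cleaner statement, and the one actually used in \cite{barnesfinite}, is that smashing with a cofibrant spectrum preserves rational $\pi_*$-isomorphisms, which you do invoke correctly at the end via dualisability of the generators and the tom Dieck splitting.
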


Let us digress temporarily to the case of a general profinite group $G$. Repeating the above work we can make a model
category of rational $G$-spectra on a complete universe, which has generators of form $G/H_+$, where $H$ runs over the set of all open subgroups of $G$. Combining a calculation with a little more terminology we can prove a first attempt at classifying
rational $G$-spectra in terms of an algebraic model.

\begin{theorem}\label{thm:allinzero}
For $G$ a profinite group, the graded $\qq$-module $[G/H_+, G/K_+]^{G \Sp_\qq}_*$
is concentrated in degree zero.
\end{theorem}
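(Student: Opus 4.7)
The plan is to reduce the computation to $\pi_*^H(G/K_+) \otimes \qq$ and then apply the Segal--tom Dieck splitting. Since $G/H_+$ is compact in the stable homotopy category of $G$-spectra, mapping out of it commutes with the rationalisation functor $(-) \smashprod S^0 \qq$, so
$$[G/H_+, G/K_+]^{G \Sp_\qq}_* \cong \pi_*^H(G/K_+) \otimes \qq$$
via the standard adjunction $[G/H_+, X]^G_* \cong \pi_*^H(X)$. I would then apply Proposition \ref{prop:splitting} (or its analogue from \cite{fauskeqpro} for the profinite group $H$, an open subgroup of $G$) to $Y = G/K_+$ to obtain
$$\pi_*^H(G/K_+) \cong \bigoplus_{(L)} \pi_*^s \bigl( E W_H(L)_+ \smashprod_{W_H(L)} (G/K)^L_+ \bigr),$$
with $(L)$ ranging over conjugacy classes of open subgroups $L \leqslant H$.

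Next I analyse each summand. Since $K$ is open in $G$, the set $G/K$ is finite and discrete, so $(G/K)^L$ is a finite discrete $W_H(L)$-set. The Weyl group $W_H(L) = N_H(L)/L$ is itself finite, because $L$ has finite index in $N_H(L)$. Decomposing $(G/K)^L$ into $W_H(L)$-orbits, each of the form $W_H(L)/F$ with $F$ a finite subgroup of $W_H(L)$, each summand becomes a finite wedge of classifying space spectra:
$$E W_H(L)_+ \smashprod_{W_H(L)} (W_H(L)/F)_+ \simeq BF_+.$$
For a finite group $F$, one has $\pi_*^s(BF_+) \otimes \qq \cong H_*(BF; \qq)$, which is $\qq$ in degree zero and vanishes in positive degrees; hence the full direct sum is concentrated in degree zero.

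The main obstacle is verifying that the Segal--tom Dieck splitting holds for arbitrary profinite groups rather than only for $\zz_p$, since the proposition as quoted is phrased only for $G = \zz_p$. In \cite{fauskeqpro} this should be available in the required generality, but one must cite the appropriate result; once that is in hand, all remaining steps rely only on elementary facts about finite groups and the vanishing of their rational homology in positive degrees.
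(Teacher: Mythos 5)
Your proof is correct and follows essentially the same route as the paper: reduce to $\pi_*^H(G/K_+)$, apply Segal--tom~Dieck splitting over open subgroups of $H$, observe that each Weyl group $W_H L$ is finite and $(G/K)^L$ is a finite $W_H L$-set, and conclude via the vanishing of rational stable homotopy of $BF_+$ in positive degrees for finite $F$. Your caveat about Proposition \ref{prop:splitting} being stated only for $\zz_p$ is well taken -- the paper does indeed invoke the splitting for an arbitrary profinite $H$ without explicitly restating the more general result from \cite{fauskeqpro}, so citing that generality is the one loose end both proofs share.
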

\begin{proof}
To prove this we use the splitting result of Proposition \ref{prop:splitting}, via the isomorphisms:
$$[G/H_+,G/K_+]^G_*  \cong [S,G/K_+]^H_* \cong \pi_*^H(G/K_+)$$
where everything is stable and rational, of course. Now we split this homotopy group into a sum over the collection of open subgroups of $H$.
$$
\pi_*^H(G/K_+) \cong \oplus_{(L) \in {\textrm{Open}(H)}}
\pi_* \big( E W_H L_+ \smashprod_{W_H L} (G/K_+)^L  \big)
$$
Each $L$ in the above has finite index in $H$, so $W_H L$ is a finite group. The $G$-set $G/K$ is also finite, hence $(G/K)^L$ is a finite $W_H L$ set. Thus we can decompose $(G/K)^L$ as the following coproduct $\coprod_{i \in I} W_H L/M_i$, for $I$ some finite indexing set. We then have a series of isomorphisms which follow from the standard results: $X \smashprod_G G/H \cong X/H$, $(EG)/H = BH$ and $\pi_* (BH_+) \cong \qq$, for a finite group $H$.
$$
\begin{array}{rcl}
\pi_*\big( E W_H L_+ \smashprod_{W_H L} (G/K_+)^L  \big)
& \cong &
\oplus_{i \in I} \pi_*\big( E W_H L_+ \smashprod_{W_H L} (W_H L/M_i)_+ \big) \\
& \cong &
\oplus_{i \in I} \pi_*\big( (B M_i)_+ \big) \\
& \cong &
\oplus_{i \in I} \qq
\end{array}
$$
\end{proof}

\begin{definition}\label{def:orbitcategory}
For $G$ a profinite group, let $\ocal_G$ be the $\qq \leftmod$-enriched category with
objects the collection $G/H$, for $H$ open in $G$ and morphisms given by
$$\ocal_G (G/H, G/K) = [G/H_+, G/K_+]^{G \Sp_\qq}_*.$$
 A \textbf{right module over $\ocal_G$}
is a contravariant enriched functor from $\ocal_G$ to $\qq \leftmod$.
This category is also known as the category of \textbf{rational Mackey functors} for the group $G$.
\end{definition}
When $G$ is finite, this notion coincides with the usual concept
of Mackey functors for $G$. The category of differential graded
right modules over $\ocal_G$, $dg \rightmod \ocal_G$, has a model structure with weak equivalences and fibrations defined
objectwise.

\begin{theorem}\label{thm:ssapp}
For a profinite group $G$, the model category of rational $G$-spectra is Quillen equivalent to
the model category of differential graded right modules over $\rightmod \ocal_G$.
\end{theorem}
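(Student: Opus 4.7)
The plan is to deduce this theorem from the general Schwede--Shipley Morita-style classification of stable model categories with a set of compact generators, combined with a formality argument that rests crucially on Theorem \ref{thm:allinzero}.

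First I would verify that the set $\mathcal{G} = \{G/H_+ \mid H \text{ open in } G\}$ is a set of compact generators for the homotopy category of $G \Sp_\qq$. Compactness follows from these being (images of) finite $G$-CW spectra, and generation follows from the fact that the homotopy category of rational $G$-spectra has weak equivalences detected by the functors $\pi_*^H$ for $H$ open, each of which is corepresented (up to suspension) by $G/H_+$, exactly as in the non-rational case recorded before Proposition \ref{prop:splitting}. Then invoking Schwede--Shipley's classification (\cite{ss03stabmodcat}, Theorem 3.9.3 / Example 5.1.2), one obtains a zig-zag of Quillen equivalences between $G \Sp_\qq$ and the model category of right modules over the spectral endomorphism category $\mathcal{E}(\mathcal{G})$, whose object set is $\mathcal{G}$ and whose morphism spectra are the function spectra $F(G/H_+, G/K_+)$ in a suitable symmetric monoidal model of rational $G$-spectra.

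Next I would replace this spectral enrichment with a $dg$-$\qq$-enrichment. The strategy is: first pass from $\mathcal{E}(\mathcal{G})$ to an $H\qq$-algebra enriched category by smashing with $H\qq$ (all spectra here are already rational, so this is harmless), then apply Shipley's chain of Quillen equivalences between $H\qq$-algebras and $dg$-$\qq$-algebras (\cite{shi02}, generalised objectwise to small spectral categories) to obtain a small $dg$-$\qq$-category $\mathcal{D}$ whose module category is Quillen equivalent to $G \Sp_\qq$. The key computation from Theorem \ref{thm:allinzero} tells us that the homology of $\mathcal{D}(G/H, G/K)$ is concentrated in degree zero, and equals $[G/H_+, G/K_+]^{G\Sp_\qq}_0 = \ocal_G(G/H, G/K)$. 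Hence the quasi-isomorphism type of $\mathcal{D}$ is determined by its $H_0$, i.e.\ $\mathcal{D}$ is formal: one constructs a zig-zag of $dg$-functors between $\mathcal{D}$ and the discrete $dg$-category $\ocal_G$ which are quasi-isomorphisms on each morphism complex.

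Finally, a quasi-isomorphism of small $dg$-categories induces a Quillen equivalence on the associated categories of right $dg$-modules (with the projective model structure defined objectwise, which matches the model structure in Definition \ref{def:orbitcategory}), so composing all the steps gives the required Quillen equivalence between $G\Sp_\qq$ and $dg \rightmod \ocal_G$. The main obstacle, and the only step that requires anything specific to our situation as opposed to general machinery, is the formality of $\mathcal{D}$: this is where Theorem \ref{thm:allinzero} does the essential work, ruling out any $A_\infty$-obstructions since everything lives in degree zero. All other steps are standard invocations of Schwede--Shipley and Shipley's $H\qq$/$dg$-$\qq$ equivalence, applied enrichment-wise.
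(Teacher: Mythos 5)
Your proposal is correct and takes essentially the same route as the paper: the paper packages the compact-generation, spectral-to-$dg$ transfer, and formality steps into the single citation \cite[Theorem A.1.1 and Proposition B.2.1]{ss03stabmodcat}, with Theorem \ref{thm:allinzero} supplying the degree-zero hypothesis, and the paper unrolls exactly your zig-zag later in the proof of Theorem \ref{thm:main}. The only slip is a citation: Shipley's $H\qq$-to-$dg$-$\qq$ equivalence is \cite{shiHZ}, not \cite{shi02} (a torus paper).
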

\begin{proof}
Since $\ocal_G$ is concentrated in degree zero,
\cite[Theorem A.1.1 and Proposition B.2.1]{ss03stabmodcat}
give the result.
\end{proof}

We now have our topological category and in one sense we have completed our task for a general profinite group $G$: we have an algebraic category that is equivalent to rational $G$-spectra. But it isn't always so clear what this algebraic model is, nor is it necessarily easy to construct objects in it. We would also like a method of calculating maps between two rational $G$-spectra via this category, so we look for an alternative description. Since $\ocal_G(G/H, G/K)$ is a module over the rational Burnside ring of $G$, our next step is to study that ring.

\newpage

\section{The Burnside ring of the \texorpdfstring{$p$}{p}-adic integers}\label{sec:burnside}

The Burnside ring $A(G)$ of a finite group $G$ is the Grothendieck ring of finite $G$-sets. There is an isomorphism between the Burnside ring and the self maps of the sphere spectrum in the homotopy category of $G$-spectra.

\begin{definition}
We define the Burnside ring of $\zz_p$ to be $\colim_n A(\zz/p^n)$,
the colimit over $n$ of the Burnside rings of the finite groups
$\zz/p^n$.
\end{definition}

It is easy to see that $\colim_n A(\zz/p^n)$ is isomorphic to
the Grothendieck ring of finite discrete $G$-sets:
those finite sets $X$ where $\zz_p$ acts on $X$ by first projecting to $\zz/p^n$
for some $n$. From here on we only work rationally,
so $A(\zz_p)$ or $A(\zz/p^n)$ will mean the rationalised Burnside rings.

For our calculations an alternative description of the Burnside rings is useful.
Let us consider a finite group $G$, associated to this is the discrete space of conjugacy classes of subgroups of $G$, $\scal G$. Then the Burnside ring of $G$ is isomorphic to $C (\scal G, \qq)$, the ring of continuous maps from $\scal G$ to the discrete topological space $\qq$.

A group homomorphism $G_i \to G_j$ induces a map $\scal G_i \to \scal G_j$, which induces a map
$C (\scal G_j, \qq) \to C (\scal G_i, \qq)$. Let's use this to find a description of the Burnside ring of $\zz_p$ which we know to be isomorphic to the colimit over $n$ of the rings $C (\scal \zz/p^n, \qq)$.

Some easy calculations give the following diagram, where we have added the undefined term $\scal \zz_p$:
$$\xymatrix{
  {\scal \zz/p^0}
& {\scal \zz/p^1} \ar[l]
& {\scal \zz/p^2} \ar[l]
& {\scal \zz/p^3} \ar[l] & \ar[l] \cdots &\ar[l] \scal \zz_p \\
  {\zz/p^0}
& {\zz/p^1} \ar[l]
& {\zz/p^2} \ar[l]
& {\zz/p^3} \ar[l] & \ar[l] \cdots &\ar[l] \zz_p\\
& {p\zz/p^1} \ar[ul]
& {p\zz/p^2} \ar[l]
& {p\zz/p^3} \ar[l] & \ar[l] \cdots &\ar[l] p\zz_p\\
&& {p^2\zz/p^2} \ar[ul]
& {p^2\zz/p^3} \ar[l] & \ar[l]  \cdots &\ar[l] p^2\zz_p\\
&&& {p^3\zz/p^3} \ar[ul]& \ar[l] \cdots &\ar[l] p^3 \zz_p\\
&&&& \ar[ul] \cdots
}$$

To explain this concretely, consider $p=2$, then the above diagram says that the
$\zz/8$-subgroups $4\zz / 8$ (a two element group) and $8\zz / 8$ (the trivial group)
are identified under the  projection $\zz/8 \to \zz/4$. Whereas the subgroup
$2\zz/8$ (the cyclic group of order 4) becomes $2\zz/4$ under the projection.

The set $\scal \zz_p$ of closed subgroups of $\zz_p$ can be given a topology via the Hausdorff metric on $\zz_p$.
We can describe $\scal \zz_p$ as the subspace of $\rr$ consisting of those points $p^{-n}$ for $n \geqslant 0$
and the point $0$. We label the point $p^{-n}$ as $p^n \zz_p$ and the point $0$
as the trivial group. Define the function $e_n \co \scal {\zz_p} \to \qq$ to be that map which sends $p^{-n}$ to 1 and all other points to zero.

Note that $C (\scal \zz_p, \qq)$ is  the ring of eventually constant rational sequences: for large enough $k$ all points $p^k \zz_p$ are sent to the same rational number. We can write such a sequence as $(a_0, a_1, a_2, \dots)$ where $a_n$ is the value of the sequence at $p^n \zz_p$.

\begin{lemma}
There is an isomorphism
$\colim_n C (\scal \zz/p^n, \qq) \to C (\scal \zz_p, \qq)$
induced by the maps $\scal \zz_p \to \scal \zz/p^n$,
which sends $p^k \zz_p$ to $p^k \zz/p^n$
and the trivial group to $p^n \zz/p^n$.
\end{lemma}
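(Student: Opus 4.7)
The plan is to identify both sides of the claimed isomorphism as explicit rings of $\qq$-valued sequences and read off the induced map.

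I would start by making the target concrete. The space $\scal \zz_p$ is homeomorphic to the subspace $\{p^{-n} : n \geq 0\} \cup \{0\}$ of $\rr$, so any continuous map to the discrete space $\qq$ must be constant on a neighbourhood of $0$. Hence $C(\scal \zz_p, \qq)$ is exactly the ring of eventually constant rational sequences $(a_0, a_1, a_2, \ldots)$, with $a_k$ the value at $p^k \zz_p$ and the eventual constant the value at the trivial subgroup.

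Next I would compute the domain. Each $\scal \zz/p^n$ is discrete with $n+1$ elements, so $C(\scal \zz/p^n, \qq) \cong \qq^{n+1}$. From the diagram in the paper, the induced map $\scal \zz/p^n \to \scal \zz/p^{n-1}$ sends $p^k \zz/p^n$ to $p^k \zz/p^{n-1}$ for $k \leq n-1$ and sends $p^n \zz/p^n$ to $p^{n-1} \zz/p^{n-1}$; in particular $p^{n-1} \zz/p^n$ and $p^n \zz/p^n$ both land on the trivial subgroup. Pulling back therefore gives the transition map $\qq^n \to \qq^{n+1}$, $(a_0, \ldots, a_{n-1}) \mapsto (a_0, \ldots, a_{n-1}, a_{n-1})$, duplicating the last entry.

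Then I would analyse the candidate map. The map $\iota_n \co \scal \zz_p \to \scal \zz/p^n$ from the statement sends $p^k \zz_p$ to $p^k \zz/p^n$ for $k < n$, and sends every $p^k \zz_p$ with $k \geq n$ together with the trivial subgroup all to $p^n \zz/p^n$. Pullback along $\iota_n$ therefore sends $(a_0, \ldots, a_n)$ to the eventually constant sequence $(a_0, \ldots, a_{n-1}, a_n, a_n, a_n, \ldots)$. These pullbacks are compatible with the duplicating transition maps above, so they assemble into a well-defined ring map $\colim_n C(\scal \zz/p^n, \qq) \to C(\scal \zz_p, \qq)$. Surjectivity is immediate: any eventually constant sequence stabilising at $a$ from index $n$ onwards is the image of $(a_0, \ldots, a_{n-1}, a)$ at level $n$. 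For injectivity, two representatives at levels $m \leq n$ mapping to the same sequence on $\scal \zz_p$ must, after pushing the level-$m$ one forward to level $n$ by repeated duplication, agree entrywise with the level-$n$ representative.

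The only real bookkeeping challenge is keeping the two collapses straight: $\iota_n$ identifies an infinite tail $\{p^k \zz_p : k \geq n\} \cup \{0\}$ of $\scal \zz_p$ to a single point, whereas each single step in the colimit system collapses only two. Once this is set up correctly, the claim that the colimit of $\qq^{n+1}$ under last-entry duplication is precisely the ring of eventually constant rational sequences is routine.
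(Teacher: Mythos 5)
Your proposal is correct and takes essentially the same approach as the paper's proof: both sides are identified as explicit rings of $\qq$-valued sequences, and the map is read off as pullback along $\iota_n \colon \scal \zz_p \to \scal \zz/p^n$, with bijectivity checked by hand. You simply spell out the parts the paper leaves as "easy to check," namely that the colimit transition maps are last-entry duplication and that compatibility, surjectivity, and injectivity follow from that description.
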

\begin{proof}
Take some element of $\colim_n C (\scal \zz/p^n, \qq)$, then
it is represented by some element $f \in C (\scal \zz/p^n, \qq)$, for
suitably large $n$.
Now we define $g \co \scal \zz_p \to \qq$ from $f$,
let $g (p^k \zz_p) = f(p^k \zz/p^n)$ for $0 \leqslant k \leqslant n$
and let $g (p^k \zz_p) = f(p^n \zz/p^n)$ for $k \geqslant n$.
The map $g$ is precisely that map induced from $f$ by $\scal \zz_p \to \scal \zz/ p^n$.
This new map is continuous as it is eventually constant. It is easy to
check that this assignment is well-defined and gives an isomorphism.
\end{proof}

The ring $C (\scal \zz_p, \qq)$ is part of a pullback square in the category of
commutative rings:
$$\xymatrix{
C (\scal \zz_p, \qq) \ar[r] \ar[d] & \qq \ar[d] \\
\prod_{n \geqslant 0} \qq \ar[r] & \colim_k \prod_{n \geqslant k} \qq
}$$
We will need this kind of pullback square later and we also note that this ring
occurs in the study of rational $O(2)$-spectra, we discuss this further in
section \ref{sec:relations}.

It is also important to understand how the inclusion map $p^n \zz_p \to \zz_p$
induces a map of Burnside rings $i_n^* \co A(\zz_p) \to A(p^n \zz_p)$.
Since $p^n \zz_p$ and $\zz_p$ are isomorphic as abelian groups
($\zz_p \to p^n \zz_p$, $x \mapsto p^n x$ is an isomorphism),
$A(p^n \zz_p)$ is isomorphic as a ring to to $A(\zz_p)$, but
$i_n^*$ is \emph{not} an isomorphism. We can describe $A(p^n \zz_p)$ as the ring of
eventually constant sequences $(a_n, a_{n+1}, a_{n+2}, \dots)$ where
$a_k$ is the value of the sequence at $p^k \zz_p$, for $k \geqslant n$. This gives a nice definition of $i_n^*$.

\begin{lemma}
The map $i_n^* \co A(\zz_p) \to A(p^n \zz_p)$
acts by truncation, it takes the eventually constant
sequence $(a_0, a_1, a_2, \dots)$ to
$(a_n, a_{n+1}, a_{n+2}, \dots)$.
\end{lemma}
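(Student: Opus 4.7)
The plan is to reduce the claim to the finite-group level via the colimit presentations of both Burnside rings, and then to identify each finite-level restriction map as precomposition with an inclusion of subgroup sets. This makes $i_n^*$ literal truncation of sequences.

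First I would unwind $i_n^*$ at the finite level. For each $k \geqslant n$, the inclusion $i_n \colon p^n \zz_p \hookrightarrow \zz_p$ descends to an inclusion $p^n \zz_p / p^k \zz_p \hookrightarrow \zz_p/p^k \zz_p$, and under the canonical isomorphism $p^n \zz_p/p^k \zz_p \cong \zz/p^{k-n}$ this is the inclusion of the unique subgroup of index $p^n$ in $\zz/p^k$. Because $A(\zz_p)$ and $A(p^n \zz_p)$ are the colimits over $k$ of the finite-level Burnside rings $A(\zz/p^k)$ and $A(\zz/p^{k-n})$, and $i_n^*$ is the colimit of the corresponding finite-level restriction maps, it suffices to prove the claim at each stage $k$.

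Next, at level $k$, I would use the identification $A(\zz/p^m) \cong C(\scal \zz/p^m, \qq)$ from the start of this section together with the observation that every subgroup of $\zz/p^{k-n}$, viewed through the index-$p^n$ embedding, is already a subgroup of $\zz/p^k$ of the same size. Thus the induced map of subgroup sets $\scal \zz/p^{k-n} \to \scal \zz/p^k$ sends $p^j \zz/p^{k-n}$ to $p^{n+j}\zz/p^k$. The restriction map $A(\zz/p^k) \to A(\zz/p^{k-n})$ is then precomposition with this injection, which in the sequence notation $(a_0, a_1, \ldots, a_k)$ for functions on $\scal \zz/p^k$ is precisely the truncation $(a_0, a_1, \ldots, a_k) \mapsto (a_n, a_{n+1}, \ldots, a_k)$.

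Finally, I would pass to the colimit over $k$ using the preceding lemma: the colimits of the finite-level continuous function rings are $C(\scal \zz_p, \qq)$ and $C(\scal p^n \zz_p, \qq)$ respectively, and the colimit of a compatible family of truncations is the infinite truncation $(a_0, a_1, a_2, \ldots) \mapsto (a_n, a_{n+1}, a_{n+2}, \ldots)$. The main subtlety I anticipate is in the second step: one must carefully match the reindexing induced by $p^n \zz/p^k \cong \zz/p^{k-n}$ with the sequence convention used to label continuous functions on $\scal \zz/p^k$, but once this bookkeeping is done the claim follows immediately.
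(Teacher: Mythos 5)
Your proof is correct, but it takes a noticeably longer route than the paper's. The paper argues entirely at the infinite level: it observes that the closed subgroups of $p^n \zz_p$ are exactly $p^k \zz_p$ for $k \geqslant n$ together with the trivial group, and that the map $\scal(p^n\zz_p) \to \scal(\zz_p)$ induced by the inclusion $i_n$ simply sends each such subgroup to itself inside $\zz_p$. Since both Burnside rings are identified with $C(\scal(-), \qq)$ and $i_n^*$ is precomposition with this inclusion of subgroup spaces, truncation is read off immediately. You instead descend to the finite quotients $\zz/p^k$ and $p^n\zz_p/p^k\zz_p \cong \zz/p^{k-n}$, verify truncation at each finite stage, and pass to the colimit. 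This is a valid alternative that has the pedagogical merit of making explicit how $A(\zz_p)$ arises from the finite Burnside rings, at the cost of more bookkeeping. The one point you assert without justification is that $i_n^*$ is the colimit of the finite-level restriction maps: this requires the restriction maps $A(\zz/p^k) \to A(\zz/p^{k-n})$ to commute with the inflation maps in the colimit diagram, which holds because $p^n\zz/p^{k+1}$ is precisely the preimage of $p^n\zz/p^k$ under the projection $\zz/p^{k+1} \to \zz/p^k$, a standard compatibility of restriction and inflation. It is a minor gap, easily filled, but worth flagging since it is what makes the reduction to finite stages legitimate; the paper's direct argument sidesteps it entirely.
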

\begin{proof}
This is easy to see, the subgroups of $p^n \zz_p$
have form $p^k \zz_p$ for $k \geqslant n$, these are sent to
$p^k \zz_p$ as a subgroup of $\zz_p$.
\end{proof}

Now we describe the algebraic model that will represent the homotopy category of  rational $\zz_p$-spectra. By represent, we mean that we will produce (in section \ref{sec:proof}) a series of Quillen equivalences relating the model category of rational $\zz_p$-spectra from section \ref{sec:ratspectra} and the model category of differential graded objects in the algebraic model.

\section{The Algebraic Model}\label{sec:model}
Now we can describe $\acal = \acal(\zz_p)$, the algebraic model for rational $\zz_p$-spectra. We begin with a brief introduction to sets with an action of $\zz_p$.

\begin{definition}
A $\qq$-module $M$ is said to be a \textbf{discrete $\zz_p$-module} if
there is a group action $\zz_p \times M \to M$ such that for any $m$,
the action of $\zz_p$ on $m$ factors through some finite quotient of
$\zz_p$.
\end{definition}
The purpose of this definition is that a $\zz_p$-module $M$ is discrete if and only if
the action map $\zz_p \times M \to M$ is a continuous map, using the discrete
topology on $M$ and the standard topology on $\zz_p$.

\begin{lemma}
If $M$ is a set with a not necessarily continuous action of $\zz_p$,
then there is a natural $\zz_p$-equivariant inclusion map of a discrete $\zz_p$-set into $M$:
$$
\colim_n M^{p^n \zz_p} \to M.
$$
This map is an isomorphism of and only if $M$ is discrete.
\end{lemma}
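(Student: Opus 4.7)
The plan is to identify the colimit with an ascending union inside $M$, then read off the two required properties directly.

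First I would note that since $p^{n+1} \zz_p \subseteq p^n \zz_p$, one has $M^{p^n \zz_p} \subseteq M^{p^{n+1} \zz_p}$ for every $n$, so the filtered diagram $\{M^{p^n \zz_p}\}_n$ has only inclusions as transition maps. Its colimit is therefore literally the union $\bigcup_n M^{p^n \zz_p}$ inside $M$, and the canonical map into $M$ is just the inclusion of this union, which is automatically injective. Because $\zz_p$ is abelian, each $p^n \zz_p$ is normal, so $M^{p^n \zz_p}$ is a $\zz_p$-stable subset of $M$; this makes the colimit a $\zz_p$-subset and the inclusion $\zz_p$-equivariant. Naturality in $M$ follows because a $\zz_p$-equivariant map $f \co M \to N$ carries $M^{p^n\zz_p}$ into $N^{p^n\zz_p}$ for each $n$, inducing a map on colimits which is compatible with the inclusions into $M$ and $N$.

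Next I would verify that $\colim_n M^{p^n \zz_p}$ is itself discrete. Any element $m$ of the colimit lies in $M^{p^n\zz_p}$ for some $n$, so its stabiliser contains $p^n\zz_p$ and the action of $\zz_p$ on the orbit of $m$ factors through the finite quotient $\zz_p/p^n\zz_p$, as required by the definition of a discrete $\zz_p$-module.

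Finally, for the ``if and only if'' statement: suppose $M$ is discrete. Given $m \in M$, the action on $m$ factors through a finite quotient of $\zz_p$; since the open subgroups of $\zz_p$ are exactly the $p^n\zz_p$ (recalled at the start of section \ref{sec:ratspectra}), this quotient must be $\zz_p/p^n\zz_p$ for some $n$, so $p^n\zz_p$ fixes $m$ and $m \in M^{p^n\zz_p}$. Hence the inclusion is surjective, and therefore an isomorphism. Conversely, if the inclusion is an isomorphism, every $m \in M$ belongs to $M^{p^n\zz_p}$ for some $n$, so its action factors through the finite group $\zz_p/p^n\zz_p$, making $M$ discrete.

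The argument is essentially bookkeeping, so there is no serious obstacle; the only point that needs a moment of care is that ``finite quotient of $\zz_p$'' in the definition of discreteness is interpreted via continuous quotients, so that the finite quotients in play are precisely the $\zz_p/p^n\zz_p$. This is exactly what links the condition defining discreteness to the indexing set of the colimit.
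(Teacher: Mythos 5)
The paper states this lemma without proof, so there is no written argument to compare your attempt against; the author treats it as routine. Your proof supplies exactly the verification that was left implicit and is correct: the transition maps in the diagram are inclusions since $p^{n+1}\zz_p \subseteq p^n\zz_p$, so the colimit is the ascending union $\bigcup_n M^{p^n\zz_p}$; each $M^{p^n\zz_p}$ is $\zz_p$-stable because $\zz_p$ is abelian (so $p^n\zz_p$ is normal); and the translation ``stabiliser contains $p^n\zz_p$'' $\Longleftrightarrow$ ``action on $m$ factors through $\zz_p/p^n\zz_p$'' handles discreteness and the iff.

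One small refinement to your closing caveat: for $\zz_p$ there is no interpretive choice to make. Every abstract finite quotient of $\zz_p$ is automatically a continuous one. If $\zz_p \twoheadrightarrow Q$ has kernel $K$ of finite index $n$, write $n = p^a m$ with $\gcd(m,p)=1$; then $n\zz_p \subseteq K$, and since $m$ is a unit in $\zz_p$ we get $n\zz_p = p^a\zz_p$, so $K$ is open, and (being a finite-index subgroup of the procyclic group $\zz_p$) equals $p^b\zz_p$ for some $b \leq a$. So the statement that the finite quotients in play are precisely the $\zz_p/p^n\zz_p$ is a fact, not a convention, and your ``if and only if'' argument goes through unconditionally.
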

We can think of $\colim_n M^{p^n \zz_p}$ as the discrete part of $M$, if
$N \to M$ is a $\zz_p$-equivariant map, with $N$ discrete, then this map factors
$N \to \colim_n M^{p^n \zz_p} \to  M$.

The category of discrete $\zz_p$-modules is a closed monoidal category,
with tensor product given by tensoring over $\qq$ and homomorphism object given by taking the discrete part of $\hom_\qq(-,-)$, which is a $\zz_p$-module by conjugation.

\begin{proposition}
There is a cofibrantly generated model structure on the category of differential graded rational discrete $\zz_p$-modules, where a map is a weak equivalence if and only if it is a homology isomorphism and the fibrations are the surjections.  Let $\qq[\zz_p]_d \leftmod$ denote this model category of discrete $\zz_p$-modules.
\end{proposition}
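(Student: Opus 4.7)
The plan is to apply Hovey's recognition theorem for cofibrantly generated model categories from \cite{hov99}. As generating cofibrations and generating acyclic cofibrations one takes
$$
I = \{ S^{n-1} \otimes \qq[\zz_p/p^k] \to D^n \otimes \qq[\zz_p/p^k] : n \in \zz,\ k \geqslant 0 \},
\qquad
J = \{ 0 \to D^n \otimes \qq[\zz_p/p^k] : n \in \zz,\ k \geqslant 0 \},
$$
where $S^{n-1}V$ is the complex concentrated in degree $n-1$ and $D^n V$ is the disc complex with $V$ in degrees $n$ and $n-1$ joined by the identity differential.

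The crucial preliminary is that each fixed-point functor $(-)^{p^k \zz_p}$ on $\qq[\zz_p]_d \leftmod$ is exact and commutes with filtered colimits. The filtered-colimit property amounts to finite presentability of $\qq[\zz_p/p^k]$ via the adjunction $\hom_{\zz_p}(\qq[\zz_p/p^k], -) = (-)^{p^k \zz_p}$, and is immediate from the discrete hypothesis once one verifies that any finite set of relations in a filtered colimit can be pushed into a single stage. Exactness restricts, on the full subcategory of modules on which the action factors through $\zz/p^n$, to the $\zz/p^{n-k}$-fixed-point functor on $\qq[\zz/p^n]$-modules, which is exact by Maschke's theorem; the general statement follows by passing to filtered colimits. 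A corollary, though not strictly needed for the model structure, is that $\qq[\zz_p]_d \leftmod$ is a semisimple abelian category.

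With these observations in hand the recognition theorem hypotheses are routine to verify. Smallness of the domains of $I$ and $J$ follows from finite presentability of $\qq[\zz_p/p^k]$. A relative $J$-cell complex is a split monomorphism whose cofibre is a direct sum of contractible disc complexes, hence a quasi-isomorphism. The right lifting property with respect to $J$ translates by adjunction to surjectivity of each $f^{p^k \zz_p}_n \co X_n^{p^k \zz_p} \to Y_n^{p^k \zz_p}$; since every discrete $\zz_p$-module is the colimit of its fixed-point submodules and the fixed-point functors are exact, this is equivalent to $f$ being a degreewise surjection in $\qq[\zz_p]_d \leftmod$. The right lifting property with respect to $I$ is the standard characterization of surjective quasi-isomorphisms, using that maps out of $S^{n-1}V$ and $D^n V$ detect $(n-1)$-cycles and $n$-chains respectively. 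The inclusion $I\textrm{-inj} = J\textrm{-inj} \cap (\text{quasi-isos})$ then follows, as does the fact that $J\textrm{-cof} \subseteq I\textrm{-cof} \cap (\text{quasi-isos})$.

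The main obstacle I anticipate is the characterisation of $J$-injective maps as precisely the surjections: $J$ only directly tests lifting on each fixed-point subcomplex, so one must use both the colimit decomposition $M = \colim_k M^{p^k \zz_p}$ and the exactness of the fixed-point functors to convert fibred surjectivity into surjectivity of the map of complexes itself. Everything in this step depends on Maschke's theorem; once it is absorbed, the remainder of the argument proceeds exactly as for the projective model structure on chain complexes over a semisimple ring.
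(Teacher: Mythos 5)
Your proposal is correct and follows essentially the same route as the paper: identical generating sets $\qq[\zz_p/p^k] \otimes i$ and $\qq[\zz_p/p^k]\otimes j$, the recognition theorem, and the reduction of "$f^{p^k\zz_p}$ surjective for all $k$" to "$f$ surjective" via the facts that discrete modules are the colimit of their fixed-point submodules and that the fixed-point functors are exact rationally. The paper compresses this last step into "Since the modules are discrete and rational..."; you have simply spelled out that the rationality input is Maschke's theorem and the discreteness input is the filtered-colimit decomposition, which is exactly what the paper intends.
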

\begin{proof}
A colimit of discrete modules is discrete, limits are defined by taking the limit in the category of $\qq$-modules and then taking the discrete part.
The generating cofibrations are the maps of the form $\qq[\zz_p/p^n] \otimes i$, where $i$ is a generating cofibration for the model structure on differential graded $\qq$-modules. Similarly, the generating acyclic cofibrations are the maps of the form $\qq[\zz_p/p^n] \otimes j$, where $j$ is a generating acyclic cofibration for the model structure on differential graded $\qq$-modules.

Focusing on these generating sets, one sees that a map $f$ is a fibration if and only if each $f^{p^n\zz_p}$ is a surjection and a $f$ is a weak equivalence if and only if each $f^{p^n\zz_p}$ is a homology isomorphism. Since the modules are discrete and rational, the fibrations are precisely the surjections and the weak equivalences are precisely the homology isomorphisms.
\end{proof}

Most of the above results on $\zz_p$-modules hold in the case of a general profinite group $G$.

\begin{definition}\label{def:algmodel}
An object $M$ of the category $\acal$ is a collection of $\qq[\zz_p/p^k]$-modules $M_k$ for $k \geqslant 0$ and a discrete $\zz_p$-module $M_\infty$, with a specified map of $\zz_p$-modules $M_\infty \to  \colim_n \prod_{k \geqslant n} M_k$, called the \textbf{structure map} of $M$.
A map $f \co M \to N$ in this category is then a collection of $\qq[\zz_p/p^k]$-module maps $f_k \co M_k \to N_k$ and a map of $\zz_p$-modules $f_\infty \co M_\infty \to N_\infty$ which give a commutative square:
$$
\xymatrix{
M_\infty  \ar[r] \ar[d] &
\colim_n \prod_{k \geqslant n} M_k \ar[d] \\
N_\infty  \ar[r]        &
\colim_n \prod_{k \geqslant n} N_k .
}
$$
We call this category the  \textbf{algebraic model for rational $\zz_p$-spectra}.
\end{definition}

Note that for an object $M$ of $\acal$, the $\zz_p$-module $\colim_n \prod_{k \geqslant n} M_k$ is not, in general, discrete. However, the image of the structure map of $M$, which is a submodule of $\colim_n \prod_{k \geqslant n} M_k$, is discrete.

We introduce a number of special objects of $\acal$, these will be used to create the model structure on $\acal$ and will be needed for the main proof.

\begin{definition}\label{def:specialAobjects}
For $n \geqslant 0$, let $A(n)$ be the object which takes value $\qq[\zz_p/p^n]$ in all entries greater than or equal to $n$ (including infinity) and is zero elsewhere with structure map given by the diagonal (we will also use $U$ for $A(0)$). Let $E(n)$ be that object which takes value $\qq[\zz_p/p^n]$ at $n$ and is zero everywhere else and let $L(n)$ be that object which takes value $\qq[\zz_p/p^n]$ at $\infty$ and is zero everywhere else.
\end{definition}

As we will see later, once we have our Adams spectral sequence,  $A(n)$ is the algebraic model for the spectrum $\zz_p/p^n_+$ and the collection of the $A(n)$ form a generating set for the algebraic model. The object $U$ will be seen to be the unit of the monoidal product on $\acal$.  The collections $L(n)$ and $E(n)$ will also form a set of generators for the category, these are sometimes easier to work with, as there are fewer maps between these generators. In terms of spectra, $E(n)$ represents $e_n \zz_p/p^n$ and $L(n)$ is the algebraic model for $\hocolim_k f_k \zz_p/p^n$, as we show in lemma \ref{lem:represents}.

Let $k,m,n \geqslant 0$ with $k \neq n$:
$$\begin{array}{lcl}
\hom_{\acal}(U,U) & = & A(\zz_p) \\
\hom_{\acal}(U,L^n) & = & \qq    \\
\hom_{\acal}(U,E^n) & = & \qq    \\
\hom_{\acal}(L^n,U) & = & 0    \\
\hom_{\acal}(L^n,L^n) & = & \qq[\zz_p/p^n]    \\
\hom_{\acal}(L^n,E^m) & = & 0    \\
\hom_{\acal}(L^m,L^n) & = & \qq[\zz_p/p^{\textrm{min}(m,n)}]   \\
\hom_{\acal}(E^n,U) & = & \qq  \\
\hom_{\acal}(E^n,E^n) & = & \qq[\zz_p/p^n]     \\
\hom_{\acal}(E^n,L^m) & = & 0    \\
\hom_{\acal}(E^n,E^m) & = & 0.
\end{array}$$

\begin{definition}
For an object $M \in \acal$, let
$\seq_n(M) = \prod_{k \geqslant n} M_k$ and let
$\tails(M) = \colim_n \prod_{k \geqslant n} M_k$.
Let $\bignplus_n M$ be the following pullback.
$$
\xymatrix{
\bignplus_n M \ar[r] \ar[d] &
\seq_n M \ar[d] \\
M_\infty  \ar[r]        &
\tails(M).
}
$$
We call $\bignplus_n M$ the set of \textbf{eventually specified sequences in $M$, starting at $n$}.
\end{definition}

The map $M \to \tails(M)$ can be thought of giving a set of permissible behaviours for sequences of elements
in the modules $M_n$. An element of $\bignplus$ is then an element $m$ and a sequence $(m_k)$, with $m_k \in M_k$ and $k \geqslant n$ such that the sequence agrees with the image of $m$ in $\tails(M)$, in other words, the sequence
$(m_k)$ is eventually specified by $m$. The notation $\bignplus$ is meant to look like some combination of $\oplus$ and $\prod$, since the elements of $\bignplus_n M$ are infinite sequences where we still have some control over their eventual behaviour.  Consider a special class of objects of $\acal$, those where $M_\infty = M_k$ for all $k$ and the structure map is the diagonal. Then an eventually specified sequence in $M$ is just an eventually constant sequence. We extend all these definitions to objects of $dg \acal$ without further decoration.

We introduce another category, related to $\acal$ by an adjunction that is useful in understanding the monoidal structure of $\acal$.

\begin{definition}
Let $\bcal$ be the category of sheaves of discrete $\zz_p$-modules on the space $\scal \zz_p$. We call this the \textbf{sheaf model for rational $\zz_p$-spectra}.
\end{definition}
Given any object $A \in \acal$, one defines a sheaf $UA$ on $\scal \zz_p$ by setting $UA(S_n) = \bignplus_n A$, where $S_n$ is the open set of $\scal \zz_p$ consisting of all points of the form $p^k \zz_p$ for $k \geqslant n$ and the trivial group. At the singleton set of $p^n \zz_p$ we let $UA$ take value $A_n$.
Given any $M \in \bcal$, define $RM$ to be that object of $\acal$ which takes value $M(p^n \zz_p)^{p^n \zz_p}$ at $n$ and at infinity is given by following pullback:
$$
\xymatrix{
(RM)_\infty \ar[r] \ar[d] &
\colim_n \prod_{k \geqslant n} M(p^n \zz_p)^{p^n \zz_p} \ar[d]\\
M_e \ar[r] &
\colim_n \prod_{k \geqslant n} M(p^n \zz_p).
}
$$
In the above, $M_e$ is the stalk at the trivial group and the lower horizontal map is given by realising stalks as an element of $\colim_n M(S_n)$ and then projecting onto the product of the singleton sets $p^k \zz_p$ for $k \geqslant n$.

\begin{lemma}
The functors $U$ and $R$ form an adjunction
$$
U : \acal
\overrightarrow{\longleftarrow}
\bcal : R
$$
where the left adjoint $U$ is full and faithful.
\end{lemma}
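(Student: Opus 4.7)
The plan is to exhibit a natural bijection between $\bcal(UA, M)$ and $\acal(A, RM)$ and then show the adjunction unit $A \to RUA$ is an isomorphism.

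First, I would analyse a sheaf $M$ on $\scal\zz_p$ using the basis consisting of the isolated singletons $\{p^n\zz_p\}$ and the tails $S_n$. The sheaf condition applied to the disjoint decomposition $S_n = \{p^n\zz_p\} \sqcup S_{n+1}$ gives $M(S_n) \cong M(\{p^n\zz_p\}) \times M(S_{n+1})$; iterating this and comparing with the stalk $M_e = \colim_n M(S_n)$ identifies $M(S_n)$ with the pullback of $M_e \to \colim_m \prod_{k \geqslant m} M(\{p^k\zz_p\})$ along $\prod_{k \geqslant n} M(\{p^k\zz_p\}) \to \colim_m \prod_{k \geqslant m} M(\{p^k\zz_p\})$. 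In other words, $M$ is nothing but the data $(M(\{p^k\zz_p\}), M_e, \sigma_M)$ of $\acal$-shape, except without the requirement that $p^k\zz_p$ act trivially on $M(\{p^k\zz_p\})$.

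Next I would construct the bijection. A sheaf map $\phi \co UA \to M$ is determined by its values on the basis, i.e.\ by $\zz_p$-equivariant components $\phi_k \co A_k \to M(\{p^k\zz_p\})$ together with a compatible map $\phi_e \co A_\infty \to M_e$ intertwining the two structure maps via the identification above. Since $A_k$ is a $\qq[\zz_p/p^k]$-module, $p^k\zz_p$ acts trivially on $A_k$, so $\phi_k$ factors uniquely through $M(\{p^k\zz_p\})^{p^k\zz_p} = (RM)_k$; combining this with $\phi_e$ through the universal property of the pullback defining $(RM)_\infty$ yields a unique map $A \to RM$ in $\acal$. Reversing is routine: given $A \to RM$, post-compose with the inclusions $(RM)_k \hookrightarrow M(\{p^k\zz_p\})$ to recover the components $\phi_k$, project out $\phi_e$ from the pullback, and reassemble a sheaf map using the pullback description of $\bignplus_n A = UA(S_n)$. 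Naturality in both variables is immediate from the construction.

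For the full-and-faithful claim I would show the unit $A \to RUA$ is the identity. By definition $UA(\{p^k\zz_p\}) = A_k$ carries only the $\qq[\zz_p/p^k]$-action, so $(RUA)_k = A_k^{p^k\zz_p} = A_k$. The stalk at $e$ is $\colim_n \bignplus_n A$, and a quick check shows this equals $A_\infty$: surjectivity uses that $\sigma_A(a)$ is represented by some sequence $(a_k)_{k \geqslant n}$ so that $(a, (a_k))$ lies in $\bignplus_n A$, and injectivity follows from the colimit defining $\tails(A)$. Since $A_k^{p^k\zz_p} = A_k$, the term $\colim_n \prod_{k \geqslant n} A_k^{p^k\zz_p}$ equals $\tails(A)$, so the pullback defining $(RUA)_\infty$ collapses to $A_\infty$. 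Hence $RUA = A$, so $U$ is full and faithful. The main obstacle is the careful interplay between the sheaf structure on $\scal\zz_p$ and the $\zz_p$-equivariance data — especially verifying the pullback description of $M(S_n)$ above — but once that structural identification is in hand the adjunction is forced by universal properties.
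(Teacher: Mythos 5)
The paper states this lemma without any proof, so there is no argument of the paper's to compare against; I will instead assess your proposal on its own terms. Your argument is essentially correct and follows the natural route: identify a sheaf $M$ on $\scal\zz_p$ with the data $(M(\{p^k\zz_p\}), M_e, \sigma)$ via the pullback description of $M(S_n)$, and then match morphism sets. A few points deserve to be made fully explicit in a written-up version. First, the pullback identification of $M(S_n)$ is not literally just ``iterating'' the splitting $M(S_n) \cong M(\{p^n\zz_p\}) \times M(S_{n+1})$; you need to produce a map $M(S_n) \to M_e \times_{\colim\prod M(\{p^k\zz_p\})} \prod_{k\geqslant n} M(\{p^k\zz_p\})$ from the restriction maps and the germ map, and then use the finite splittings $M(S_n) \cong M(\{p^n\zz_p\})\times\cdots\times M(\{p^{m-1}\zz_p\})\times M(S_m)$ together with the colimit description of $M_e$ to check this map is injective and surjective. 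This is a genuine sheaf-condition argument, not a formal limit. Second, when reassembling a sheaf map from $(\phi_k)_k$ and $\phi_e$, one should observe that the component $\phi_{S_n}$ on $\bignplus_n A$ obtained from the universal property of the pullback is automatically natural with respect to the inclusions $S_{n+1}\subset S_n$ and $\{p^k\zz_p\}\subset S_n$, so the collection really does define a map of sheaves. Third, for the unit computation $RUA = A$ you correctly identify $\colim_n\bignplus_n A \cong A_\infty$ and $(UA(\{p^k\zz_p\}))^{p^k\zz_p} = A_k$; it is also worth noting that the vertical map $\colim_n\prod_{k\geqslant n}A_k^{p^k\zz_p} \to \colim_n\prod_{k\geqslant n}A_k$ is the identity in this case, so the pullback defining $(RUA)_\infty$ degenerates to the graph of $\sigma_A$, which is $A_\infty$. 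With these details filled in, your proof is complete; since the paper omits the proof entirely, your argument supplies a genuine gap in the exposition.
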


\begin{theorem}
The category $dg \acal(\zz_p)$ is a proper, cofibrantly generated model category. The fibrations are those maps $f$ which are a surjection at each $n$ and infinity. The weak equivalences are those maps $f$ which are homology isomorphisms at each $n$ and infinity.
\end{theorem}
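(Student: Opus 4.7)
The plan is to invoke Kan's recognition theorem for cofibrantly generated model categories, taking the generating (acyclic) cofibrations of $\acal$ to be the images under left adjoints to the evaluation functors of the standard generators in the component model categories.

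First I would construct, for each $n \geqslant 0$, the left adjoint $F_n$ to the evaluation functor $\mathrm{ev}_n \co \acal \to dg\, \qq[\zz_p/p^n] \leftmod$: concretely $F_n(V)$ has $V$ in position $n$, zeros at all other finite levels and at infinity, and zero structure map, so that $F_n(\qq[\zz_p/p^n]) = E(n)$. Analogously, the left adjoint $F_\infty$ to $\mathrm{ev}_\infty \co \acal \to dg\, \qq[\zz_p]_d \leftmod$ places a given discrete $\zz_p$-module at infinity with zeros at every finite level, giving $F_\infty(\qq[\zz_p/p^n]) = L(n)$. I then let $I_\acal$ be the union over $n \geqslant 0$ of $F_n$ applied to the generating cofibrations of $dg\, \qq[\zz_p/p^n] \leftmod$, together with $F_\infty$ applied to the generating cofibrations of $dg\, \qq[\zz_p]_d \leftmod$, and define $J_\acal$ analogously from generating acyclic cofibrations. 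By adjunction a map $f$ has the right lifting property with respect to $I_\acal$ if and only if each $\mathrm{ev}_n(f)$ and $\mathrm{ev}_\infty(f)$ is a trivial fibration in its component model category, matching the proposed trivial fibrations in $\acal$; the analogous argument for $J_\acal$ picks out the proposed fibrations.

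It then remains to verify the hypotheses of the recognition theorem. The key ingredient is that small colimits in $\acal$ are computed componentwise, with the structure map of a colimit induced via the functoriality of $\tails$. I would check this directly: at finite levels one takes the colimit in $\qq[\zz_p/p^n] \leftmod$, at infinity one takes the colimit in $\qq[\zz_p]_d \leftmod$ (which remains discrete by an earlier lemma), and the structure map of the colimit is determined by the universal property at infinity composed with the canonical map into $\tails$ of the colimit. With this in hand, each $\mathrm{ev}_*$ commutes with colimits, so the smallness of the generating domains transfers from the component categories via the adjunction isomorphism, and every $J_\acal$-cell is a weak equivalence because pushouts and transfinite compositions are computed componentwise and weak equivalences are declared componentwise.

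For properness, each $F_*$ is left Quillen by construction, and since each $\mathrm{ev}_*$ preserves colimits it sends every cofibration of $\acal$ to a cofibration in the relevant component model category. Finite limits in $\acal$ are also componentwise (the stalk-like formulae of the previous section already implicitly exploit this), so both left and right properness reduce to the known properness of each $dg\, \qq[\zz_p/p^n] \leftmod$ and of $dg\, \qq[\zz_p]_d \leftmod$. The main obstacle is the initial verification that componentwise colimits together with the induced structure map genuinely compute the colimit in $\acal$ and preserve discreteness at infinity; once this point is pinned down, the rest of the argument is a routine application of the adjunctions $F_* \dashv \mathrm{ev}_*$ and the standard cofibrantly generated machinery.
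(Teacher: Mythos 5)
Your construction goes wrong at the $\infty$ slot, and the error is substantive rather than cosmetic. The object you call $F_\infty(W)$ (concentrated at infinity with zero structure map, so $F_\infty(\qq[\zz_p/p^n]) = L(n)$) is \emph{not} left adjoint to $\mathrm{ev}_\infty$. A map $L(n) \to M$ must make the square with structure maps commute: since $\tails(L(n)) = 0$, the condition is $\sigma_M \circ f_\infty = 0$, so $\hom_\acal(L(n),M) \cong \ker(\sigma_M)^{p^n\zz_p}$ rather than $M_\infty^{p^n\zz_p}$. The paper's own table already records this: $\hom_\acal(L(n),U) = 0$ even though $U_\infty = \qq$, because $\sigma_U$ is the (injective) diagonal. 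Consequently lifting against $L(n) \otimes J$ forces surjectivity of $\ker(\sigma_M) \to \ker(\sigma_N)$, not of $f_\infty$, and these are genuinely inequivalent: take $M_k = \qq$, $M_\infty = \qq$ with diagonal structure map, $N_k = 0$, $N_\infty = \qq$, $f_k = 0$, $f_\infty = \mathrm{id}$. Then $f$ is surjective at every slot including infinity, yet $\ker(\sigma_M) = 0 \to \ker(\sigma_N) = \qq$ is not surjective. So even if the recognition theorem applied to your generating sets produced a model structure, it would not be the one asserted in the theorem.

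The deeper obstruction is that $\mathrm{ev}_\infty$ does not in fact preserve limits, so no left adjoint exists to repair the argument. This is precisely why the paper's construction of limits in $\acal$ is not componentwise at infinity: for an infinite product $\prod_i M^i$, the $\infty$-slot is obtained from a pullback $P$ against $\colim_N \prod_{k\geq N} \prod_i M^i_k \to \prod_i \colim_N \prod_{k\geq N}M^i_k$ followed by discretisation, and the right-hand map is not an isomorphism because the left side requires a single tail-index $N$ for all $i$. The generating-from-evaluation-adjunctions strategy works cleanly only at the finite slots, where your $F_n$ really is left adjoint to $\mathrm{ev}_n$.

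The paper sidesteps this by taking $I_\acal = \{A(n) \otimes I\}$ and $J_\acal = \{A(n) \otimes J\}$, where $A(n)$ is $\qq[\zz_p/p^n]$ at every level $\geqslant n$ \emph{and} at $\infty$, with diagonal structure map. One computes $\hom_\acal(A(n),M) \cong (\bignplus_n M)^{p^n\zz_p}$, and the identification of fibrations then requires the $\bignplus$-machinery, in particular the splitting $(\bignplus_n M)^{p^n\zz_p} \cong (\bignplus_{n+1}M)^{p^n\zz_p} \oplus M_n$ and the observation that, given a surjection at $\infty$, only finitely many of the terms $M_k$ remain unconstrained by the structure map. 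Your proposal omits exactly the interaction between the structure map and the finite levels that $\bignplus_n$ is designed to encode, and it is this interaction that makes the theorem's fibrations the correct ones.
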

\begin{proof}
Given a small diagram $M^i$ of objects of $\acal$, the colimit is formed
by taking the colimit at each point, so $(\colim_i M^i)_k = \colim_i (M^i_k)$
and similarly so at infinity.  The map below induces (via the universal properties of colimits)
a structure map for $\colim_i M^i$.
$$
M^i_\infty \longrightarrow
\colim_n \prod_{k \geqslant n} M_k^i \longrightarrow
\colim_n \prod_{k \geqslant n} \colim_i M_k^i .
$$

Limits are harder to define, as before $(\lim_i M^i)_k = \lim_i (M^i_k)$,
but at infinity we must do the following. Form the pullback diagram
$$\xymatrix{
P
\ar[r]
\ar[d] &
\colim_N \prod_{k \geqslant N} \lim_i M_k^i
\ar[d] \\
\lim_i M_\infty^i
\ar[r] & \lim_i \colim_N \prod_{k \geqslant N} M_k^i
}$$
then let $(\lim_i M^i)_\infty$ be the discrete submodule of $P$. It is routine to check that these constructions of colimits and limits have the appropriate universal properties. One can also define the limit of the $M^i$ as $R\lim_i (UM^i)$, $R$ applied to the limit of the associated objects of $\bcal$.

The generating cofibrations and acyclic cofibrations are the sets
$$I_\acal = \{ A(n) \otimes I \}$$
$$J_\acal = \{ A(n) \otimes J \}$$
where $I$ ($J$) is the set of generating cofibration (acyclic cofibrations) for
$dg \qq \leftmod$. To prove that this gives the model structure we have described, we must identify the
fibrations and acyclic fibrations.

A map $f$ has the right lifting property with respect to $I$ (or $J$) if and only if $(\bignplus_n f)^{p^n \zz_p}$
is a surjection for each $n$. It is routine to show that $f$ has the lifting property if and only if each $f_n$
and $f_\infty$ is a surjection (or surjection and homology isomorphism). The main points needed are
the natural isomorphism
$$(\bignplus_n M)^{p^n \zz_p} = (\bignplus_{n+1} M)^{p^n \zz_p} \oplus M_n$$
and the general technique of first dealing with the term from $M_\infty$ and
then dealing with the remaining finite number of terms in the $M_k$ which are not specified by $\sigma M_\infty$.

Left properness is easy to see, for right properness we use the fact that for any finite limit diagram $M^i$, $(\lim_i M^i)_\infty = \lim_i (M^i_\infty)$.
\end{proof}

\begin{lemma}
The pair $(U,R)$ are a Quillen pair when $dg \acal$ is given the above model structure and $dg \bcal$ is given the levelwise model structure.
The pair $(U,R)$ are a Quillen equivalence when $dg \acal$ is given the above model structure and $dg \bcal$ is given the model structure determined by applying $U$ to the generating sets of cofibrations and acyclic cofibrations of $dg \acal$.
\end{lemma}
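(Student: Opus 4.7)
The statement splits naturally into three tasks: verifying the Quillen pair condition with respect to the levelwise model structure on $dg\bcal$, constructing the transferred model structure on $dg\bcal$, and deducing the Quillen equivalence. I handle the first task by showing that the right adjoint $R$ preserves fibrations and acyclic fibrations in the levelwise sense. These are characterised sectionwise as surjections, respectively surjective quasi-isomorphisms, of discrete $\zz_p$-modules. Given such an $f \co M \to N$, the component $(Rf)_n = f(p^n \zz_p)^{p^n \zz_p}$ inherits these properties because $p^n\zz_p$-fixed points of a rational chain complex of discrete $\zz_p$-modules are computed by averaging over the finite quotient group and so define an exact functor. The infinity component $(Rf)_\infty$ is then the discrete submodule of the pullback defining $(RN)_\infty$, and surjectivity and homology isomorphism are both stable under pullback of chain complexes and under passage to the discrete submodule in the rational setting.

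For the second task I appeal to the classical transfer theorem for cofibrantly generated model categories (for instance Hirschhorn 11.3.2). Smallness of the domains and codomains of $UI_\acal$ and $UJ_\acal$ is automatic in the locally presentable category $\bcal$. The substantive hypothesis is the acyclicity condition: every relative $UJ_\acal$-cell complex $X \to X'$ in $dg\bcal$ must satisfy that $R(X \to X')$ is a weak equivalence in $dg\acal$, and this is the main obstacle of the proof. The strategy is first to note that each generator $U(A(n) \otimes j)$ is a sectionwise acyclic monomorphism, since $j$ is an acyclic cofibration of rational chain complexes and $U$ preserves the relevant exactness; then to propagate this through the cellular filtration, exploiting that in the additive chain-complex setting pushouts and transfinite compositions of sectionwise acyclic monomorphisms remain sectionwise acyclic monomorphisms with contractible cofibres; and finally to invoke the analysis of the first task, together with the fact that $U$ is fully faithful so that $\eta \co \mathrm{id}_\acal \to RU$ is an isomorphism, to conclude that $R$ sends the resulting cells to weak equivalences in $dg\acal$. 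A path-object argument on fibrant targets provides the standard cleanup.

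For the third task essentially nothing more is required. In the transferred model structure the weak equivalences in $dg\bcal$ are by construction precisely those maps $f$ with $Rf$ a weak equivalence in $dg\acal$. Let $X$ be cofibrant in $dg\acal$ and $Y$ fibrant in $dg\bcal$, and let $f \co UX \to Y$ have adjoint $\tilde f \co X \to RY$. The triangle identity gives $\tilde f = Rf \circ \eta_X$, and since $\eta_X$ is an isomorphism by full faithfulness of $U$, the map $\tilde f$ is a weak equivalence in $dg\acal$ if and only if $Rf$ is, which is equivalent to $f$ being a weak equivalence in $dg\bcal$. This verifies the standard Quillen equivalence criterion and completes the proof.
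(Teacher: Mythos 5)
The paper gives no proof of this lemma beyond the remark that it is an instance of the cellularisation principle, co-localising the levelwise structure on $dg\bcal$ at the cells $UA(n)$. Your argument instead invokes Kan's transfer theorem along $(U,R)$, which is a genuinely different mechanism: cellularisation keeps the levelwise fibrations and alters the weak equivalences to the $UA(n)$-colocal ones, whereas the transfer defines both weak equivalences and fibrations to be $R$-detected. It is not a priori clear these produce the same model structure on $dg\bcal$, though the phrase ``determined by applying $U$ to the generating sets'' does fit the transferred structure better, so your reading is defensible, and your final Quillen-equivalence step via full faithfulness of $U$ is clean and correct once that structure exists.

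There are, however, two gaps earlier on. First, to show $R$ preserves (acyclic) fibrations you assert that surjectivity is ``stable under pullback of chain complexes.'' The pullback of a surjection \emph{along} a map is surjective, but the induced map \emph{between} two pullbacks, given cornerwise surjections, need not be: with $A=A'=0$, $B=B'=C=\qq$, $C'=0$, and $h\colon B\to C$ the identity, the pullbacks are $P=0$ and $P'=\qq$, and $P\to P'$ is not onto. So surjectivity of $(Rf)_\infty$ is not formal; it needs the specific shape of the defining square (for instance the fact that $\colim_n\prod_{k\geqslant n}M(p^k\zz_p)^{p^k\zz_p}\to\colim_n\prod_{k\geqslant n}M(p^k\zz_p)$ is a monomorphism, so $(RM)_\infty$ embeds in $M_e$), and even then an honest argument is required. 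The cleaner route to the Quillen-pair claim is to check that the left adjoint $U$ sends the generators $A(n)\otimes i$ and $A(n)\otimes j$ to levelwise (acyclic) cofibrations, which is also what the cellularisation-principle viewpoint would have you do. Second, your acyclicity step ``invokes the analysis of the first task,'' but that task (even if repaired) establishes that $R$ preserves acyclic \emph{fibrations}, whereas the transfer theorem needs $R$ to send relative $UJ_\acal$-cell complexes, which are sectionwise acyclic \emph{monomorphisms}, to weak equivalences of $dg\acal$. The troublesome $\infty$-component reappears: a pullback along a monomorphism does not generally preserve quasi-isomorphisms without a fibrancy hypothesis on one leg, so this too must be argued from the concrete form of $(R-)_\infty$ rather than from a blanket stability claim.
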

This lemma is an example of what is sometimes known as the cellularisation principle, where we co-localise the model structure on $dg \bcal$ at the cells $U A(n)$, for $n \geqslant 0$.

Now we can use the adjunction $(U,R)$ to define a monoidal product and internal homomorphism object on $\acal$. We start with the category of rational discrete $\zz_p$-modules, which is a closed monoidal category with monoidal product $\otimes_\qq$. This monoidal product induces a tensor product on $\bcal$, defined by $(M \otimes N)(U) = M(U) \otimes_\qq N(U)$. The internal homomorphism object is as usual for sheaves, $\hom(M,N)(U) = \hom_{\bcal_{|U}}(M_{|U}, N_{|U})$, where the right hand side is the discrete part of the set of maps of rational sheaves from $M$ restricted to $U$ to $N$ restricted to $U$ (where $\zz_p$ acts by conjugation).

Now let $M$ and $N$  be in $\acal$, then the monoidal product of $M$ and $N$ is $M \otimes N$, which at $n$ takes value $M_n \otimes_\qq M_n$ and at $\infty$ takes value $M_\infty \otimes N_\infty$.  The structure map is as follows.
$$
\begin{array}{rcll}
M_\infty \otimes N_\infty
& \rightarrow &
\colim_n \prod_{k \geqslant n} M_k \otimes
\colim_l \prod_{l \geqslant m} N_l \\
& \rightarrow &
  \colim_{n,l} (\prod_{k \geqslant n} M_k \otimes
   \prod_{l \geqslant m} N_l ) \\
& \overset{\cong}{\leftarrow} &
  \colim_{n} (\prod_{k \geqslant n} M_k \otimes
   \prod_{l \geqslant n} N_k ) \\
& \rightarrow &
  \colim_{n} (\prod_{k \geqslant n} M_k \otimes N_k ) \\
\end{array}
$$

The internal function object requires us to briefly use $\bcal$, we define $\hom(A,B) = R \hom(UA, UB)$, the functor $R$ applied to the $\bcal$-homomorphism object of maps from $UA$ to $UB$. The following series of adjunctions shows that this object has the correct properties to be the internal homomorphism object of $\acal$:
$$
\acal(A \otimes B, C)
\cong \bcal (UA \otimes UB, UC)
\cong \bcal (UA , \hom(UB, UC)
\cong \acal (A , R\hom(UB, UC)
$$

\begin{lemma}
The model category $dg \acal$ is a monoidal model category that satisfies the monoid axiom.
\end{lemma}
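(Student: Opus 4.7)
The plan is to reduce all three monoidal axioms to levelwise questions in the familiar category of differential graded $\qq$-modules, using the fact from the preceding theorem that weak equivalences and fibrations of $dg \acal$ are detected at each $k$ and at $\infty$, and the fact that $\qq$ is a field. From the definition of $\otimes$, at each finite level and at infinity the tensor product is simply $(M \otimes N)_k = M_k \otimes_\qq N_k$; only the structure map depends on more of the data, and the structure map plays no part in the model structure. The unit $U = A(0)$ is cofibrant, since $0 \to A(0)$ is the generating cofibration $A(0) \otimes (0 \to \qq)$, so the unit axiom is immediate.

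For the pushout-product axiom, it suffices to check on the generating (acyclic) cofibrations. A direct calculation, using the Mackey formula and the abelianness of $\zz_p$, gives
$$A(n) \otimes A(m) \cong A(\max(n,m))^{\oplus p^{\min(n,m)}}.$$
Combined with the fact that $A(n) \otimes -$ is a left adjoint, this yields
$$(A(n) \otimes i) \square (A(m) \otimes j) \cong A(\max(n,m))^{\oplus p^{\min(n,m)}} \otimes (i \square j).$$
The map $i \square j$ is a cofibration of $dg \qq\leftmod$, and acyclic if either $i$ or $j$ is, by the pushout-product axiom there; tensoring any cofibration of $dg \qq\leftmod$ with $A(k)$ gives a cofibration of $dg \acal$ by the very definition of $I_\acal$. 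The acyclic case of the pushout-product will follow from the monoid-axiom argument below.

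For the monoid axiom, fix $X \in dg \acal$ and a generating acyclic cofibration $j = A(n) \otimes j'$, where $j' \co A \to B$ is a generating acyclic cofibration of $dg\qq\leftmod$. Since $\qq$ is a field, every $\qq$-module is flat, so at each level $k$ and at $\infty$ the map $X \otimes j$ is a monomorphism of $\qq$-modules whose cokernel is the tensor product of $X_k$ (respectively $X_\infty$) with $B/A$, which is acyclic since $B/A$ is. Both pushouts and transfinite compositions in $dg \acal$ are computed levelwise, and in $dg \qq\leftmod$ they preserve the class of monomorphisms with acyclic cokernel. Hence the closure of $J_\acal \otimes dg \acal$ under pushouts and transfinite compositions consists of maps that are levelwise acyclic cofibrations of $\qq$-modules, which are weak equivalences in $dg \acal$; the same argument closes the acyclic case of the pushout-product axiom. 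The only non-formal step in this plan is the calculation of $A(n) \otimes A(m)$ as a sum of cells, which reduces to the identity $\qq[\zz_p/p^n] \otimes_\qq \qq[\zz_p/p^m] \cong \qq[\zz_p/p^{\max(n,m)}]^{\oplus p^{\min(n,m)}}$ of $\qq[\zz_p]$-modules with diagonal action; the remainder is bookkeeping ensured by the flatness of every $\qq$-module.
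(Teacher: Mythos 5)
Your proof is correct and takes essentially the same route as the paper's (very terse) proof: reduce the monoidal axioms to levelwise statements in $dg\, \qq \leftmod$ using the isomorphism $\qq[\zz_p/p^n] \otimes_\qq \qq[\zz_p/p^m] \cong \qq[\zz_p/p^{\max(n,m)}]^{\oplus p^{\min(n,m)}}$ of $\zz_p$-modules. One small slip: $0 \to A(0)$ is not literally a generating cofibration (those have the form $A(n) \otimes (S^{k-1} \to D^k)$), but it is the pushout of $A(0) \otimes (S^{-1} \to D^0)$ along $A(0) \otimes S^{-1} \to 0$, so $U=A(0)$ is still cofibrant and the unit axiom follows.
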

\begin{proof}
The proof essentially relies on the corresponding statements for $\qq$-modules and the fact that $\qq[\zz_p/p^n] \otimes_\qq \qq[\zz_p/p^m]$ is isomorphic, as a $\zz_p$-module, to the direct sum of $p^m$-copies of $\qq[\zz_p/p^n]$, for $n \geqslant m$.
\end{proof}

\begin{proposition}
The model category $dg \acal$ is a $dg \qq \leftmod$-model category via the
symmetric monoidal adjunction
$$\begin{array}{rcl}
U \otimes_{\qq} (-) : dg \qq \leftmod
& \overrightarrow{\longleftarrow} &
dg \acal : \hom_{\acal}(U,-).
\end{array}
$$
\end{proposition}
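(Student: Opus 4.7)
The plan is to verify three things: (i) the pair $(U \otimes_\qq (-), \hom_\acal(U,-))$ is genuinely an adjunction, (ii) the left adjoint is strong symmetric monoidal, and (iii) the pair is Quillen. Once these are in place, the $dg \qq \leftmod$-model category structure on $dg \acal$ follows by transferring the monoidal model structure from the preceding lemma via standard enrichment machinery, e.g.\ \cite[Definition 4.2.18]{hov99} and the accompanying results.

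For (i), I would compute $U \otimes_\qq V$ concretely. Since $U = A(0)$ takes value $\qq$ at every finite index and at infinity with diagonal structure map, $U \otimes_\qq V$ takes value $V$ (with trivial $\zz_p$-action, viewed as a $\qq[\zz_p/p^n]$-module) at each position and has as structure map the diagonal $V \to \colim_n \prod_{k \geqslant n} V$. A morphism $U \otimes_\qq V \to M$ in $\acal$ then reduces, via the commutative-square condition forced by the diagonal structure map, to a single $\qq$-linear map $V \to M_\infty$ landing in the discrete part, with components into each $M_n$ determined by the structure map of $M$. This is precisely the data of a $\qq$-linear map $V \to \hom_\acal(U,M)$, giving (i). For (ii), the key point is $U \otimes_\acal U \cong U$ since $U$ is the monoidal unit of $\acal$, so that $(U \otimes_\qq V) \otimes_\acal (U \otimes_\qq W)$ takes value $V \otimes_\qq W$ at every finite position and at infinity with diagonal structure map; this is exactly $U \otimes_\qq (V \otimes_\qq W)$, with coherences and the unit isomorphism $U \otimes_\qq \qq = U$ inherited from $dg \qq \leftmod$. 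Verifying (iii) is immediate on generators: $U \otimes_\qq I = A(0) \otimes I \subset I_\acal$ and similarly $U \otimes_\qq J \subset J_\acal$.

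The main obstacle is minor: it amounts to chasing through the explicit formula for the monoidal product in $\acal$ to confirm strong monoidality in the presence of the $\colim_n \prod_{k \geqslant n}$ construction, where one must check that the four-step composite collapses to a diagonal in this special case. Everything else is formal. In particular, the unit $\qq \in dg \qq \leftmod$ is cofibrant and acts as the identity on tensor products, so the unit axiom for a $\qq \leftmod$-model category is automatic; and the pushout-product axiom for the tensoring $(-) \otimes_\qq (-) : dg \qq \leftmod \times dg \acal \to dg \acal$ is a special case of the pushout-product axiom already established for the monoidal model structure on $dg \acal$, via the identification $V \otimes_\qq M \cong (U \otimes_\qq V) \otimes_\acal M$.
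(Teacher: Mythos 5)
Your step (i) is where the proposal goes astray. You claim that a morphism $U \otimes_\qq V \to M$ reduces to a single map $V \to M_\infty$ with the components into each $M_n$ ``determined by the structure map of $M$.'' This is not correct: the structure map $\sigma_M \co M_\infty \to \colim_n \prod_{k \geqslant n} M_k$ only constrains the \emph{eventual} tail of a sequence $(m_k)$, so the entries $m_k$ for small $k$ are genuinely additional data, not determined by $M_\infty$. Concretely, a morphism $U \otimes_\qq V \to M$ is a map $V \to M_\infty^{\zz_p}$ \emph{together with} maps $V \to M_k^{\zz_p/p^k}$ for every $k$, subject only to the requirement that the two resulting images in $\colim_N \prod_{k \geqslant N} M_k$ coincide. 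Accordingly, $\hom_\acal(U,M)$ is not (the $\zz_p$-fixed part of) $M_\infty$ but rather the pullback of
$$M_\infty^{\zz_p} \longrightarrow \colim_N \prod_{k \geqslant N} M_k \longleftarrow \prod_{k \geqslant 0} M_k^{\zz_p/p^k},$$
which is exactly the formula the paper records. Your description discards the contribution of the finitely many low-degree factors $M_k^{\zz_p/p^k}$.

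This mistake does not sink the overall strategy, since the adjunction itself is a formal consequence of $\acal$ being a closed $\qq\leftmod$-enriched symmetric monoidal category with unit $U$, and does not actually require the explicit unraveling you attempted. Your steps (ii) and (iii) --- strong symmetric monoidality of $U \otimes_\qq (-)$ via $U \otimes U \cong U$ and the collapse of the four-step structure map to a diagonal, and Quillenness by noting $A(0) \otimes I \subset I_\acal$ and $A(0) \otimes J \subset J_\acal$ --- are correct and in fact mirror the paper's own very terse proof, which simply asserts strong symmetric monoidality and writes down the pullback formula for the right adjoint. Replacing your faulty description of $\hom_\acal(U,-)$ with the correct pullback would make the argument sound.
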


\newpage

\begin{proof}
The left adjoint is clearly strong symmetric monoidal, we also note that the right adjoint, applied to an object $M \in \acal$, can be described as the following pullback.
$$
\xymatrix{
\hom_\acal(U,M) \ar[r] \ar[d] & \prod_{k \geqslant 0} M_k^{\zz_p/p^k} \ar[d] \\
M_\infty^{\zz_p} \ar[r] & \colim_N \prod_{k \geqslant N} M_k
}
$$
\end{proof}

We now show that our algebraic category is really just the category of rational Mackey functors. The main advantage of using $\acal$ is the ease of description and construction, especially when we construct the Adams short exact sequence.

\section{Rational Mackey Functors}\label{sec:mackey}

To understand rational $\zz_p$-Mackey functors, it is important to have a good description of $\ocal_{\zz_p}$, see definition \ref{def:orbitcategory}, we can calculate this via tom-Dieck splitting, but to obtain a more useful and neater characterisation, we need to see how the Burnside ring appears in $\ocal_{\zz_p}$.

The set of $\zz_p$-equivariant maps of spaces from $\zz_p/p^n$ to $\zz_p/p^m$ is isomorphic to $\zz_p/p^{min(n,m)}$. Any such map $f$ determines a map of suspension spectra, indeed, there is an injective map
$$\phi \co \map(\zz_p/p^n,\zz_p/p^m)^{\zz_p} \to [\zz_p/p^n_+, \zz_p/p^m_+]^{\zz_p}.$$
 Recall that there is an isomorphism
$A(p^{\max(n,m)}\zz_p) \to [S^0, S^0]^{p^{\max(n,m)}\zz_p}$
and then an injective map $\psi \co [S^0, S^0]^{p^{\max(n,m)}\zz_p} \to [S^0, S^0]^{p^n \zz_p}$
given by sending some eventually constant sequence starting at the maximum of $n$ and $m$
to one starting at $n$ by filling in the extra terms at the beginning with zeros.

Choose some map $f \in \map(\zz_p/p^n,\zz_p/p^m)^{\zz_p}$ and some $x \in A(p^{\max(n,m)}\zz_p)$, then we have a map
$$\phi(f) \smashprod \psi(x) \in [S^0, \zz_p/p^m_+]^{p^n \zz_p}
\cong [\zz_p/p^n_+, \zz_p/p^m_+]^{\zz_p}.$$
The projection $[S^0, \zz_p/p^m_+]^{p^n \zz_p} \to [S^0, S^0]^{p^n \zz_p}$
takes $\phi(f) \smashprod \psi(x)$ to $\psi(x)$. Thus for each $f$, the association $x \mapsto \phi(f) \smashprod \psi(x)$ is injective. It is also clear that
$\phi(f) \smashprod \psi(x)$ and $\phi(g) \smashprod \psi(x)$ are not homotopic whenever
$f \neq g \in \map(\zz_p/p^n,\zz_p/p^m)^{\zz_p}$.

We have thus defined a map
$$
\chi \co A(p^{\max(n,m)}\zz_p) \times \map(\zz_p/p^n, \zz_p/p^m)^{\zz_p}
\longrightarrow
[\zz_p/p^n_+, \zz_p/p^m_+]^{\zz_p}
$$
and we have seen that it is injective.

\begin{proposition}\label{prop:orbcatcalc}
For any $n ,  m \geqslant 0$, the map below is an isomorphism
$$
\chi \co A(p^{\max(n,m)}\zz_p) \otimes \qq[\zz_p/p^{min(n,m)}]
\longrightarrow
[\zz_p/p^n_+, \zz_p/p^m_+]^{\zz_p}
$$
\end{proposition}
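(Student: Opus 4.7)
The plan is to prove the proposition by computing the target abstractly using orbit decomposition and change of groups, then verifying that the already-injective map $\chi$ realises the identification. The first step is to apply the change of groups adjunction $((\zz_p)_+ \smashprod_{p^n \zz_p} (-), i_n^*)$ from Section \ref{sec:ratspectra} to rewrite
$$[\zz_p/p^n_+, \zz_p/p^m_+]^{\zz_p} \cong \pi_0^{p^n\zz_p}((\zz_p/p^m)_+).$$

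Next I would decompose $\zz_p/p^m$ as a $p^n\zz_p$-set. Since $p^n\zz_p$ acts by translation, the stabiliser of every element of $\zz_p/p^m$ is $p^n\zz_p \cap p^m\zz_p = p^{\max(n,m)}\zz_p$, so each orbit is isomorphic to $p^n\zz_p/p^{\max(n,m)}\zz_p$; a count of cosets yields exactly $p^{\min(n,m)}$ orbits, giving a wedge decomposition of $(\zz_p/p^m)_+$ in based $p^n\zz_p$-spaces. Applying $\pi_0^{p^n\zz_p}$ to this decomposition and using a further change of groups to reduce each summand via $\pi_0^{p^n\zz_p}((p^n\zz_p/p^{\max(n,m)}\zz_p)_+) \cong \pi_0^{p^{\max(n,m)}\zz_p}(S^0) = A(p^{\max(n,m)}\zz_p)$ gives
$$[\zz_p/p^n_+, \zz_p/p^m_+]^{\zz_p} \cong A(p^{\max(n,m)}\zz_p)^{\oplus p^{\min(n,m)}} \cong A(p^{\max(n,m)}\zz_p) \otimes \qq[\zz_p/p^{\min(n,m)}],$$
which matches the source of $\chi$ as an abelian group and as a module over $A(p^{\max(n,m)}\zz_p)$.

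To finish, I would verify that $\chi$ itself realises this identification. The factor $\psi$ implements the $A(p^{\max(n,m)}\zz_p)$-module action on each summand, while varying $f$ across $\map(\zz_p/p^n,\zz_p/p^m)^{\zz_p}$ indexes the $p^{\min(n,m)}$ orbit-summands via the orbit containing the image of the identity coset under $f$. Combined with the injectivity of $\chi$ established in the paragraphs preceding the proposition, this forces $\chi$ to be surjective, yielding the isomorphism. The hardest part will be making precise the correspondence between the $p^{\min(n,m)}$ equivariant maps and the $p^{\min(n,m)}$ orbit-summands: one must carefully unpack the constructions of $\phi$ and $\psi$ to see that the composition $\phi(f) \smashprod \psi(x)$ realises a generator of the summand indexed by the orbit of $f([0])$ in $\zz_p/p^m$, and that this matching is compatible with the $A(p^{\max(n,m)}\zz_p)$-module structure supplied by $\psi$.
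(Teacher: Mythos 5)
Your argument is correct but routes through a different decomposition than the paper's. After the shared first step $[\zz_p/p^n_+, \zz_p/p^m_+]^{\zz_p} \cong \pi_0^{p^n\zz_p}((\zz_p/p^m)_+)$, the paper applies the Segal--tom Dieck splitting of Proposition \ref{prop:splitting} over the open subgroups of $p^n\zz_p$ and keeps track of where $e_k \otimes x$ lands by writing $e_k = p^{-k}\zz/p^k - p^{-k-1}\zz/p^{k+1}$ in the rational Grothendieck ring; you instead decompose the $\zz_p$-set $\zz_p/p^m$ into its $p^{\min(n,m)}$ orbits of type $p^n\zz_p/p^{\max(n,m)}\zz_p$ under $p^n\zz_p$ and read off each orbit's contribution as $A(p^{\max(n,m)}\zz_p)$. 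The two decompositions are dual in spirit (by orbits of the space versus by isotropy subgroups and their idempotents), and your version is arguably cleaner. Two things deserve a second look. First, the identification $\pi_0^{p^n\zz_p}((p^n\zz_p/L)_+) \cong A(L)$ is \emph{not} the induction--restriction adjunction that the paper sets up; it rests on self-duality of finite orbits (a Wirthm\"uller isomorphism) or, equivalently, on the Burnside-module computation $\pi_0^H((H/L)_+) \cong A(H/L) \cong A(L)$, which is itself a consequence of the splitting. The paper never records a Wirthm\"uller isomorphism for $\zz_p$-spectra, so you should say which route you take. Second, injectivity of $\chi$ together with an abstract isomorphism of source and target does not by itself yield surjectivity: both sides are infinite-dimensional $\qq$-modules, and an injective endomorphism of a free $A(p^{\max(n,m)}\zz_p)$-module need not be onto. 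You rightly flag this as the hardest part --- one must check that $\chi(1 \otimes [g])$ is a module generator of the orbit-summand it lands in, which is exactly the bookkeeping the paper does via the $e_k \otimes x$. Once that is supplied, the argument closes.
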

\begin{proof}
We know that the map $\chi$ exists and is injective, a routine calculation of the target by tom-Dieck splitting shows that it is also surjective. The only point worthy of note there is that when written in terms of rational Grothendieck rings, the  idempotent $e_{n}$ is given by $p^{-n} \zz/p^n - p^{-n-1} \zz/p^{n+1}$. With this understood, we can see where a term such as $e_n \otimes x$ is sent to in the tom-Dieck splitting of the target.
\end{proof}

We briefly digress to consider the case of a general ring, $R$. Assume that there is a countable collection of idempotents $\{e_k| k \geqslant 0\}$, where $e_i e_j = 0$ for $i \neq j$. We do not require that these idempotents be non-zero, hence a finite collection of orthogonal idempotents gives such a collection, by setting $e_k = 0$ for sufficiently large $k$. Define a new collection of idempotents by $f_k = 1- \Sigma_{i=0}^{k-1}e_i$.

The ring $R$ can be described as the pullback in the category of rings of the diagram
$$
\xymatrix{
R  \ar[r] \ar[d] &
\prod_{k \geqslant 0} e_k R\ar[d] \\
\colim_n f_n R  \ar[r]        &
\colim_n \prod_{k \geqslant n} e_k R.
}
$$
This pullback description also applies with $R$ in the above replaced by any $R$-module $M$.
In the case of a finite collection, $\colim_n \prod_{k \geqslant n} e_k R=0$
so this pull back is simply describing $R$ as a product of its idempotently split pieces.
In this case the $e_i$ sum to the unit if and only if $\colim_n f_n R =0$.

We want to understand in more detail what information is needed to define a $\zz_p$-Mackey functor. We are guided by the standard example: the homotopy group Mackey functor of a spectrum $X$. This takes the object $\zz_p/ p^k \zz_p$ of $\ocal_{\zz_p}$ to the graded $\qq$-module $\pi_*^{p^k \zz_p}(X)$. We can also describe this homotopy group Mackey functor as the functor below, which has codomain the category of graded $\qq$-modules:
$$[-,X]^{\zz_p}_* \co \ocal_{\zz_p} \to g \qq \leftmod.$$

Let $M$ be a rational Mackey functor, then for each $k \geqslant 0$, one has a $\qq$-module $M(\zz_p / p^k \zz^p)$, the spectrum $\zz_p / p^k \zz^p_+$ has a right action by $\zz_p/p^k$, so this $\qq$-module also has an action $\zz_p/p^k$. Further $M(\zz_p / p^k \zz^p)$ is a module over $A(p^k \zz_p)$, so we have a pullback diagram:
$$
\xymatrix{
M(\zz_p/p^k \zz_p) \ar[r] \ar[d] &
\prod_{n \geqslant k} e_n M(\zz_p/p^k \zz_p) \ar[d] \\
\colim_n f_n M(\zz_p/p^k \zz_p)  \ar[r]        &
\colim_n \prod_{m \geqslant n} e_m M(\zz_p/p^k \zz_p).
}
$$
Thus $M$ can be reconstructed from the pieces $e_n M(\zz_p/p^k \zz_p) $
and the collection of projection maps $f_n M(\zz_p/p^k \zz_p) \to \prod_{m \geqslant n} e_m M(\zz_p/p^k \zz_p)$ for varying $n$ and $k$.
Following the decomposition of rational Mackey functors in \cite{gre98b}, we expect that this decomposition can be simplified. In particular, the pieces $e_n M(\zz_p/p^k \zz_p)$ should be recoverable from the terms $M_n = e_n M(\zz_p / p^n)$. Instead of needing all the projection maps we would expect to only need a module $M_\infty = \colim_n M(p^n \zz_p)$ and a structure map $M_\infty \to \tails(M)$.

\begin{theorem}
Let $M$ be a rational Mackey functor for $\zz_p$ and define $f_k = 1- \Sigma_{i=0}^{k-1} e_i$. Then, for $k \leqslant n$, there are isomorphisms (natural in $M$):
$$
\begin{array}{lclcl}
e_n M(\zz_p / p^k) & \cong &
(e_n M(\zz_p / p^n))^{p^k \zz_p / p^n } &=& M_n^{p^k \zz_p / p^n} \\
\colim_a (f_a M(\zz_p /p^k )) & \cong &
(\colim_a M(p^a \zz_p))^{p^k \zz_p} &=& M_\infty^{p^k \zz_p} .
\end{array}
$$
Hence we have the formula below for recovering a Mackey functor from the split pieces and the structure map.
$$
\xymatrix{
M(\zz_p /p^k \zz_p)  \ar[r] \ar[d] &
\prod_{m \geqslant k} M_m^{p^k \zz_p / p^m \zz_p} \ar[d] \\
M_\infty^{p^k \zz_p}  \ar[r]        &
\colim_n \prod_{m \geqslant n} M_m^{p^k \zz_p / p^m \zz_p}
}
$$
\end{theorem}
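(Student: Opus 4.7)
The plan is to establish the two displayed isomorphisms separately and then assemble the pullback formula. Throughout I use that the $A(\zz_p)$-action on $M(\zz_p/p^k)$ factors through the truncation $A(\zz_p) \to A(p^k\zz_p)$ of section \ref{sec:burnside}, so $e_n$ acts as zero on $M(\zz_p/p^k)$ whenever $n<k$. For $k\leqslant n$, the quotient $\pi_{k,n} \co \zz_p/p^n \to \zz_p/p^k$ is a morphism in $\ocal_{\zz_p}$; by contravariance it induces a restriction $\pi_{k,n}^\ast \co M(\zz_p/p^k) \to M(\zz_p/p^n)$ compatible with the Burnside action (via $A(p^k\zz_p) \to A(p^n\zz_p)$) and equivariant for the natural $\zz_p/p^n$-action, with $p^k\zz_p/p^n\zz_p$ acting trivially on the domain.

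For the first isomorphism, $e_n\pi_{k,n}^\ast$ lands in $M_n^{p^k\zz_p/p^n\zz_p}$ by the above equivariance. Its inverse is $p^{-(n-k)}\textrm{tr}_{k,n}$, where $\textrm{tr}$ is the Mackey transfer. The relevant Burnside identities are $\textrm{tr}_{k,n}\circ\pi_{k,n}^\ast = [p^k\zz_p/p^n\zz_p] = p^{n-k}f_n$ (acting as $p^{n-k}\cdot\textrm{id}$ on $e_n$-pieces because $e_nf_n = e_n$), and $\pi_{k,n}^\ast\circ\textrm{tr}_{k,n} = \sum_{g\in p^k\zz_p/p^n\zz_p} g$ on $M(\zz_p/p^n)$ (which acts as $p^{n-k}\cdot\textrm{id}$ on the $p^k\zz_p/p^n\zz_p$-fixed points); combined with the Frobenius relation $\textrm{tr}(e_n y)=e_n\textrm{tr}(y)$, these yield mutually inverse maps.

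For the second isomorphism I first unpack $\colim_a f_a M(\zz_p/p^k)$ as the quotient $M(\zz_p/p^k)/\bigoplus_{i\geqslant k} e_i M(\zz_p/p^k)$ (the direct colimit of the surjective retractions $f_a M \twoheadrightarrow f_{a+1} M$ given by multiplication by $f_{a+1}$). The tower $\pi_{k,a}^\ast$ defines a map $M(\zz_p/p^k) \to M_\infty$ landing in $p^k\zz_p$-fixed elements, which by the first isomorphism kills each $e_i M(\zz_p/p^k)$ (since $e_i$ becomes zero in $A(p^a\zz_p)$ once $a>i$), so it factors through $\colim_a f_a M(\zz_p/p^k) \to M_\infty^{p^k\zz_p}$. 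The identity $\textrm{tr}_{k,a}\circ\pi_{k,a}^\ast = p^{a-k}f_a$ controls both directions: any $x$ killed in $M_\infty$ satisfies $\pi_{k,a}^\ast(x)=0$ for large $a$, hence $p^{a-k}f_a x = 0$, so $x$ is a finite sum of $e_i$-pieces and vanishes in the quotient; for surjectivity, any representative $y_a \in M(\zz_p/p^a)$ of $y \in M_\infty^{p^k\zz_p}$ (which we may take to be $p^k\zz_p/p^a\zz_p$-fixed by rational averaging) is hit by $p^{-(a-k)}\textrm{tr}_{k,a}(y_a)$, whose class in $\colim_a f_a M(\zz_p/p^k)$ is well-defined because $\textrm{tr}$ respects the $f_b$-filtration by the Frobenius relation $\textrm{tr}(f_b y)=f_b\textrm{tr}(y)$.

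Finally, the pullback formula is obtained by applying the Burnside-ring pullback square of section \ref{sec:burnside} to the $A(\zz_p)$-module $M(\zz_p/p^k)$: its corners are $M(\zz_p/p^k)$, $\prod_{m\geqslant k} e_m M(\zz_p/p^k)$, $\colim_a f_a M(\zz_p/p^k)$, and $\colim_n\prod_{m\geqslant n} e_m M(\zz_p/p^k)$, and substituting the two proved isomorphisms into the right-hand corners produces the square in the theorem statement. I expect the main obstacle to be the second isomorphism's surjectivity, specifically verifying that the transfer-based lift lands and remains in the residual filtration as $a$ varies; the first isomorphism and the final assembly step are then standard manipulations.
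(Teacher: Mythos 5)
Your proof is correct, but it takes a genuinely different route from the paper's. The paper argues via the co-Yoneda (coend) formula for the enriched functor $M$ over $\ocal_{\zz_p}$: it constructs averaging maps $\textrm{Av}_{n,k}$ as idempotents in the homotopy category of rational $\zz_p$-spectra, transports fixed-point and idempotent decompositions through the coend, and reduces both displayed isomorphisms to checking that $\textrm{Av}_{a,k}\,\zz_p/p^a$ and $f_a\,\zz_p/p^k$ are weakly equivalent, which it verifies using the earlier computation of $\ocal_{\zz_p}$ (Proposition \ref{prop:orbcatcalc}). You instead work entirely inside the Mackey functor itself, using restriction $\pi_{k,n}^\ast$, transfer $\textrm{tr}_{k,n}$, the double coset/Frobenius relations, and the Burnside-ring identity $[p^k\zz_p/p^n\zz_p]=p^{n-k}f_n$ in $A(p^k\zz_p)$ --- which I have checked against the sequence description of $A(p^k\zz_p)$ in section \ref{sec:burnside} and is right. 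Your treatment of $\colim_a f_a M(\zz_p/p^k)$ as $M(\zz_p/p^k)/\bigoplus_{i\geqslant k}e_iM(\zz_p/p^k)$, the rational-averaging trick to choose a $p^k\zz_p/p^a\zz_p$-fixed representative of $y\in M_\infty^{p^k\zz_p}$, and the final invocation of the module pullback square from section \ref{sec:burnside} are all sound, and naturality in $M$ is automatic since $\textrm{res}$, $\textrm{tr}$ and the idempotent multiplications are natural Mackey operations. The trade-off is that your argument is more elementary and self-contained --- it never leaves the abelian category of Mackey functors and needs no coend gymnastics or weak equivalences of spectra --- but it leans on the full Mackey axioms (transfers, double coset, projection formula), which the paper's enriched-functor formulation does not invoke explicitly; the paper's approach is better integrated with the $\ocal_{\zz_p}$-module viewpoint that the rest of the classification proof is built on.

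One small presentational point: your final step needs all three non-initial corners of the pullback substituted --- the top-right $\prod_m e_m M(\zz_p/p^k)$ and bottom-right $\colim_n\prod_m e_m M(\zz_p/p^k)$ via the first isomorphism, and the bottom-left $\colim_a f_a M(\zz_p/p^k)$ via the second --- so saying only the right-hand corners are substituted undersells what you did, though the assembly is correct.
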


\begin{proof}
A variation on the Yoneda lemma states that if $F$ is a contravariant $\ccal$-enriched functor from a $\ccal$-enriched category $\ecal$ to $\ccal$, the canonical map $\int^{a\in \ecal} \ccal(x,a) \otimes F(a) \to F(x)$ is a natural isomorphism. In our setting this says that there is a natural isomorphism
$$\int^{\zz_p/p^m} [\zz_p/p^n, \zz_p/p^m]^{\zz_p} \otimes_\qq M(\zz_p/p^m) \to M(\zz_p/p^n).$$

For each element $x$ of $\zz_p/p^n$, we have a right multiplication map $r_x \co \zz_p/p^n \to \zz_p/p^n$. From these we construct $\text{Av}_{n,k} = p^{n-k} \Sigma_{x \in p^k \zz_p/p^n} r_x$, a map in the homotopy category of rational $\zz_p$-spectra. This map induces an idempotent map
$$(\text{Av}_{n,k})^* \co
[\zz_p/p^n, \zz_p/p^m]^{\zz_p} \to
[\zz_p/p^n, \zz_p/p^m]^{\zz_p}.$$
The image of this map is $\left\{ [\zz_p/p^n, \zz_p/p^m]^{\zz_p} \right\}^{p^k\zz_p}$, that is, the algebraic fixed points of this discrete $\zz_p$-module. Since fixed points in $\qq$-modules correspond to orbits, we can also write this as $\left\{ [\zz_p/p^n, \zz_p/p^m]^{\zz_p} \right\} / {p^k\zz_p}$. By naturality we obtain an isomorphism
$$\int^{\zz_p/p^m} \left\{  [\zz_p/p^n, \zz_p/p^m]^{\zz_p} \right\}/{p^k\zz_p} \otimes_\qq M(\zz_p/p^m)
\to M(\zz_p/p^n)/{p^k\zz_p} \cong M(\zz_p/p^n)^{p^k\zz_p}.$$

A similar argument shows that there are natural isomorphisms as below.
$$
\begin{array}{rcl}
\int^{\zz_p/p^m}
e_n[\zz_p/p^k, \zz_p/p^m]^{\zz_p}  \otimes_\qq M(\zz_p/p^m)
& \cong & e_n M(\zz_p/p^k) \\
\int^{\zz_p/p^m} \left\{  e_n[\zz_p/p^n, \zz_p/p^m]^{\zz_p} \right\}^{p^k\zz_p} \otimes_\qq M(\zz_p/p^m)
& \cong & e_n M(\zz_p/p^n)^{p^k\zz_p} \\
\end{array}
$$
We want to prove that the right hand sides are isomorphic, we do so by proving that the left hand sides are isomorphic, this reduces to showing that the $\qq$-modules  $\left\{  e_n[\zz_p/p^n, \zz_p/p^m]^{\zz_p} \right\}/{p^k\zz_p}$ and
$e_n[\zz_p/p^k, \zz_p/p^m]^{\zz_p}$ are naturally isomorphic for all $m \geqslant 0$.
This amounts to proving that $e_n  \text{Av}_{n,k} \zz_p/p^n$ and $e_n  \zz_p/p^k$
are weakly equivalent. We delay this to attend to the second case for a short while.

For the second statement we follow the same pattern as before, so we need a pair of natural isomorphisms:
$$
\begin{array}{rcl}
\int^{\zz_p/p^m} \colim_a f_a[\zz_p /p^k, \zz_p/p^m]^{\zz_p}
\otimes_\qq M(\zz_p/p^m)
& \cong & \colim_a (f_a M(\zz_p /p^k \zz_p)) \\
\int^{\zz_p/p^m}       \left\{    \colim_a [\zz_p/p^a, \zz_p/p^m]^{\zz_p}  \right\}/{p^k\zz_p}
\otimes_\qq M(\zz_p/p^m)
& \cong & (\colim_a M(p^n \zz_p))^{p^k \zz_p}. \\
\end{array}
$$
This would follow from the existence of a natural isomorphism between the $\qq$-modules $\colim_a f_a[\zz_p /p^k, \zz_p/p^m]^{\zz_p}$
and $\left\{    \colim_a [\zz_p/p^a, \zz_p/p^m]^{\zz_p}  \right\}/{p^k\zz_p}$.
We may as well assume that $a$ is larger than $k$, then we have the following collection of maps
$$
\begin{array}{rcl}
f_a [\zz_p /p^k, \zz_p/p^m]^{\zz_p}
& \to &
[\zz_p /p^k, \zz_p/p^m]^{\zz_p} \\
& \to &
[\zz_p /p^a, \zz_p/p^m]^{\zz_p} \\
& \to &
\left\{[\zz_p/p^a, \zz_p/p^m]^{\zz_p}  \right\}/{p^k\zz_p}
.
\end{array}
$$
Which are, from top to bottom, the inclusion of an idempotent summand,
the map induced by $\zz_p/p^a \to \zz_p/p^k$ and the quotient map.
The composite of these maps is an isomorphism if $\text{Av}_{a,k} \zz_p/p^a$ and $f_a  \zz_p/p^k$ are weakly equivalent (with this weak equivalence being natural in $a$).

Now if $\text{Av}_{a,k} \zz_p/p^a$ and $f_a \zz_p/p^k$ are weakly equivalent for $a \geqslant k$, then $e_a \text{Av}_{a,k} \zz_p/p^a$ and $e_a f_a  \zz_p/p^k \simeq e_a \zz_p/p^k$ are also weakly equivalent, which is our unfinished business from the first statement.

Consider the projection map $\zz_p/p^a \to \zz_p/p^k$, since $\zz_p/p^k$ is fixed under the action of $p^k \zz_p$, there is a map (in the homotopy category) from
$\text{Av}_{a,k} \zz_p/p^a$ to $\zz_p/p^k$. Now compose with the map $\zz_p/p^k \to f_a \zz_p/p^k$ to obtain our map $\text{Av}_{a,k} \zz_p/p^a \to f_a \zz_p/p^k$. Our calculations of $\ocal_{\zz_p}$ in proposition \ref{prop:orbcatcalc} show that we have our weak equivalence.

Now we are just required to show that one can build $M$ from the pieces $M_m$ and the map $M_\infty \to \tails (M)$, but this follows immediately from the two isomorphisms of the theorem and our earlier decomposition of $M(\zz_p/p^k \zz_p)$.
\end{proof}

Now we know precisely what information is needed to define a rational Mackey functor for $\zz_p$, as expected, this information is sufficient to make an object of $\acal$.

\begin{lemma}
Any Mackey functor $M$ defines an object of $\acal$.
\end{lemma}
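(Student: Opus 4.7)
The plan is to extract from a rational Mackey functor $M$ the three pieces of data required by Definition~\ref{def:algmodel}: a collection of $\qq[\zz_p/p^n]$-modules $M_n$, a discrete $\zz_p$-module $M_\infty$, and a structure map $\sigma_M \co M_\infty \to \tails(M)$. The preceding theorem essentially tells us what these must be, so the lemma amounts to checking that its right-hand sides assemble coherently.

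First I would define $M_n := e_n M(\zz_p/p^n\zz_p)$. By Proposition~\ref{prop:orbcatcalc} the self-maps of $\zz_p/p^n\zz_p$ in $\ocal_{\zz_p}$ are $A(p^n\zz_p) \otimes \qq[\zz_p/p^n]$, so $M(\zz_p/p^n\zz_p)$ is simultaneously a module over the Burnside ring $A(p^n\zz_p)$ and over $\qq[\zz_p/p^n]$. Since $e_n \in A(p^n\zz_p)$ commutes with the $\qq[\zz_p/p^n]$-action, its image $M_n$ is again a $\qq[\zz_p/p^n]$-module.

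Next I would set $M_\infty := \colim_a M(\zz_p/p^a\zz_p)$, where the transition maps are induced by the projections $\zz_p/p^{a+1}\zz_p \to \zz_p/p^a\zz_p$. Each term in the colimit carries a $\qq[\zz_p/p^a]$-action and, by inflation, a $\zz_p$-action; every element of the colimit is represented at some finite stage $a$, so its stabiliser contains $p^a\zz_p$, which shows that $M_\infty$ is discrete.

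For the structure map I would read off the previous theorem: for each $k$ it supplies a canonical map
$$M_\infty^{p^k\zz_p} \longrightarrow \colim_n \prod_{m \geqslant n} M_m^{p^k\zz_p/p^m\zz_p},$$
and post-composition with the inclusion $\prod_{m\geqslant n} M_m^{p^k\zz_p/p^m\zz_p} \hookrightarrow \prod_{m\geqslant n} M_m$ lands in $\tails(M)$. Because $M_\infty$ is discrete, $M_\infty = \colim_k M_\infty^{p^k\zz_p}$, and taking the colimit in $k$ assembles these into the desired $\zz_p$-equivariant map $\sigma_M \co M_\infty \to \tails(M)$.

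The main obstacle is checking that these pieces fit together as claimed: that the maps for varying $k$ are compatible (so that the colimit over $k$ is well defined), that $\sigma_M$ is genuinely $\zz_p$-equivariant, and that the construction is functorial in $M$. All three follow from the naturality built into the isomorphisms of the preceding theorem, but the first point in particular requires care, since one must track how the idempotent decomposition of each $M(\zz_p/p^k\zz_p)$ interacts with the restriction maps coming from the orbit category.
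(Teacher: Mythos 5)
Your proof is correct, but it takes a genuinely different route from the paper's. You work \emph{backwards} from the pullback square in the preceding theorem: for each $k$ you read off the bottom map $M_\infty^{p^k\zz_p} \to \colim_n \prod_{m\geqslant n} M_m^{p^k\zz_p/p^m\zz_p}$, include into $\tails(M)$, and then use discreteness of $M_\infty$ to assemble these fixed-point pieces into $\sigma_M$ via a colimit over $k$. The paper instead constructs $\sigma_M$ \emph{directly}: for each $k\geqslant n$ one has the composite $M(\zz_p/p^n\zz_p)\to M(\zz_p/p^k\zz_p)\to e_kM(\zz_p/p^k\zz_p)=M_k$ (restriction along $p^k\zz_p\hookrightarrow p^n\zz_p$ followed by idempotent projection); these assemble into a map $M(\zz_p/p^n\zz_p)\to\prod_{k\geqslant n}M_k$, and taking $\colim_n$ of both sides gives $M_\infty\to\tails(M)$ in one step. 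The paper's construction is more elementary and self-contained, does not invoke the preceding theorem, and makes $\zz_p$-equivariance and functoriality visibly automatic (everything is a colimit of natural restriction-and-projection maps). Your route buys a clean statement that the structure map is already implicit in the theorem's recovery square, but it defers exactly the compatibility checks you flag at the end — compatibility of the maps for varying $k$ and equivariance of the assembled colimit — which are nontrivial to verify via the theorem's isomorphisms, whereas they are essentially free in the direct construction.
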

\begin{proof}
For each $k \geqslant n$ we have a map $M(\zz_p /p^n \zz_p) \to e_k M(\zz_p /p^k \zz_p) = M_k$. This map is a composition of projection onto an idempotent summand and the restriction map induced by $p^k \zz_p \to p^n \zz_p$.
Hence we have a map $M(\zz_p /p^n \zz_p) \to \prod_{k \geqslant n} M_k$.
Taking colimits over $n$ of both sides of this map gives the structure map
$$M_\infty \to  \colim_n \prod_{k \geqslant n} M_k = \tails(M).$$
\end{proof}

The above theorem also shows that for any Mackey functor $M$, its image in $\acal$ is all that is needed to reconstruct it.

\begin{corollary}
The category of rational Mackey functors for $\zz_p$ is  equivalent to the algebraic model $\acal$.
\end{corollary}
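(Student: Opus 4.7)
The plan is to promote the lemma preceding the corollary to an equivalence of categories by constructing an explicit inverse. Let $\Phi \co \text{Mackey}(\zz_p) \to \acal$ be the functor supplied by the previous lemma, sending $M$ to the object with $M_n = e_n M(\zz_p/p^n)$, $M_\infty = \colim_a M(p^a\zz_p)$, and the structure map as described. I would construct a candidate inverse $\Psi \co \acal \to \text{Mackey}(\zz_p)$ by setting, for any $(M_n, M_\infty, \sigma_M) \in \acal$,
$$
\Psi(M)(\zz_p/p^k) \;=\; M_\infty^{p^k \zz_p} \times_{\colim_n \prod_{m \geqslant n} M_m^{p^k\zz_p/p^m\zz_p}} \prod_{m \geqslant k} M_m^{p^k\zz_p/p^m\zz_p},
$$
i.e., the pullback appearing in the statement of the theorem above the lemma.

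The first step is to promote this assignment to a functor on $\ocal_{\zz_p}$, using the explicit calculation in Proposition \ref{prop:orbcatcalc}: every morphism decomposes as a Burnside ring element on a $\qq[\zz_p/p^{\min(n,m)}]$-basis of $\zz_p$-maps. I would define the action of a generating $\zz_p$-map $f \co \zz_p/p^n \to \zz_p/p^m$ on the pullback $\Psi(M)(\zz_p/p^m) \to \Psi(M)(\zz_p/p^n)$ componentwise, using the $\qq[\zz_p/p^m]$-module structure on each $M_m$ (to twist the factors in the fixed-point products) and the discrete $\zz_p$-action on $M_\infty$. The action of a Burnside element $x \in A(p^{\max(n,m)}\zz_p)$ comes for free from the observation that each $M_k$ and $M_\infty$ already carries an $A(p^k\zz_p)$, respectively $A(\zz_p)$, module structure, since $M_\infty \to \tails(M)$ is a map of $\zz_p$-modules and the decomposition of the Burnside ring lines up with the decomposition of $\acal$-objects into $M_n$ and $M_\infty$ pieces.

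The second step is to verify $\Phi\Psi \cong \mathrm{id}_\acal$ and $\Psi\Phi \cong \mathrm{id}_{\text{Mackey}}$. The first is essentially formal: applying the idempotent $e_k$ to the pullback defining $\Psi(M)(\zz_p/p^k)$ collapses the $M_\infty$ row and the $\prod_{m \geqslant k, m \neq k}$ factor to zero, leaving only $M_k$; and the $\zz_p$-equivariant colimit $\colim_a \Psi(M)(p^a\zz_p)$ recovers $M_\infty$ on the nose because $\sigma_M$ has discrete image. The second isomorphism $\Psi\Phi \cong \mathrm{id}$ is exactly the conclusion of the theorem preceding the lemma, which gives the reconstruction formula naturally in $M$. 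Finally, fullness and faithfulness of $\Phi$ on morphisms follows by chasing a morphism $f \co M \to N$ of Mackey functors through the same pullback: the components $f_n \co M_n \to N_n$ and $f_\infty \co M_\infty \to N_\infty$ determine $f$ at every object of $\ocal_{\zz_p}$ via naturality of the pullback formula.

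The main obstacle is the first step, namely checking that $\Psi$ really is a $\ocal_{\zz_p}$-enriched functor: one must verify that the formulas assembled out of fixed points and idempotents respect composition of the basic morphism types (restriction along $p^n\zz_p \hookrightarrow p^k\zz_p$, right multiplication by elements of $\zz_p/p^{\min(n,m)}$, and Burnside ring scalars). This is a bookkeeping exercise rather than a conceptual difficulty; the key input is Proposition \ref{prop:orbcatcalc} together with the pullback description of the Burnside ring $A(\zz_p)$ recorded in Section \ref{sec:burnside}, which ensures that the splitting by idempotents is compatible with the fixed-point decomposition used to define $\Psi(M)(\zz_p/p^k)$.
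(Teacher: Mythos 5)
Your approach is essentially the paper's: the corollary is a one-line consequence of the reconstruction theorem (the pullback formula recovering $M(\zz_p/p^k)$ from the $M_n$ and $M_\infty$), and your verification of $\Psi\Phi \cong \mathrm{id}$ is precisely that theorem. You go somewhat further and spell out essential surjectivity --- equipping $\Psi(M)$ with an $\ocal_{\zz_p}$-functor structure for an arbitrary $M \in \acal$ via Proposition \ref{prop:orbcatcalc} --- a step the paper leaves implicit, and your plan for the attendant bookkeeping (idempotent actions on the pullback, compatibility of restriction with the fixed-point filtration) is sound.
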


We are very close to completing half of our task, we know by theorem \ref{thm:ssapp} that
the model category of rational $\zz_p$-spectra is Quillen equivalent to differential graded rational Mackey functors and
we have just shown that the category of rational Mackey functors is equivalent to $\acal$. But we still have to show that
the model structure on $dg \acal$ that we introduced gives the correct homotopy category, this will be easier to do once we have a better method of calculation.

\section{The Adams Short Exact Sequence}\label{sec:adams}

We define a functor from the homotopy category of
rational $\zz_p$-spectra to graded objects of $\acal(\zz_p)$.
This allows us to find a short exact sequence relating the homotopy category of spectra with $\acal(\zz_p)$,
thus allowing us to understand maps in the homotopy category of $\zz_p \Sp_\qq$.

For a spectrum $X$, the graded Mackey functor $\underline{\pi}_*(X)$ is the represented functor $[-,X]^{\zz_p}_*$ from $\ocal_{\zz_p}$ to graded rational vector spaces. From this we can make an object of $g\acal$ (the category of graded objects in $\acal$), following the recipe of the previous section.

\begin{definition}\label{def:homotopyMackey}
For a spectrum $X$, define $\underline{\pi}^{\acal}_*(X) \in g \acal$
as follows, at $n$ it takes value
$e_{n} \pi_*^{p^n \zz_p} (X)$, at infinity it takes value
$\colim_k  \pi_*^{p^k \zz_p} (X)$.
There is a map
$$
\pi_*^{p^k \zz_p} (X)
\to
\prod_{m \geqslant k}
e_{m} \pi_*^{p^m \zz_p} (X)
$$
given by restriction from $p^k \zz_p \to p^m \zz_p$
and projection onto an idempotent factor.
Applying colimits gives the structure map of $\underline{\pi}^{\acal}_*(X)$.
\end{definition}
We note that $\colim_k  \pi_*^{p^k \zz_p} (X)$
is a discrete $\zz_p$-module as any given element
is in some $\zz_p/p^k$-module. The homotopy groups above are, as usual
rationalised.
Since the functor $\underline{\pi}^{\acal}_*$ from rational $\zz_p$-spectra
to graded objects in $\acal$
is defined in terms of homotopy
groups and idempotents, it follows that
it passes to a functor on the homotopy category
of rational $\zz_p$-spectra.

\begin{lemma}\label{lem:enoughinjectives}
The monic maps in $\acal$ are exactly those maps that are
termwise injections.
If $M \in \acal$ has a surjective structure map, then it is injective.
For any object of $\acal$ there is a monomorphism to
an injective object, whose cokernel is injective.
\end{lemma}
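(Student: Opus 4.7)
The plan is to treat the three claims in sequence, with the construction of the one-step injective resolution in part 3 needing the most care. An auxiliary fact I would establish first and invoke repeatedly is that the category of rational discrete $\zz_p$-modules is semisimple: every monomorphism $N\hookrightarrow M$ splits. The argument writes $M=\bigcup_n M^{p^n\zz_p}$ and inductively constructs $\qq[\zz_p/p^n]$-linear complements $C^n$ of $N^{p^n\zz_p}$ in $M^{p^n\zz_p}$ with $C^{n-1}\subseteq C^n$, using semisimplicity of $\qq[\zz_p/p^n]$-modules together with the observation that $C^n\cap N^{p^{n+1}\zz_p}\subseteq C^n\cap N^{p^n\zz_p}=0$; the union $\bigcup_n C^n$ is the required complement.

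For part 1, that termwise injections are monic is clear from the componentwise nature of morphisms. Conversely, if $f\co M\to N$ is monic I would realise its kernel in $\acal$ by setting $K_k=\ker f_k$, $K_\infty=\ker f_\infty$, and $\sigma_K$ equal to the restriction of $\sigma_M$. The only check is that $\sigma_M(K_\infty)\subseteq\tails(K)$: for $x\in K_\infty$ with $\sigma_M(x)=[(m_k)]$, commutativity of the $f$-square forces $(f_k(m_k))$ to represent $\sigma_N(0)=0\in\tails(N)$, so $f_k(m_k)=0$ eventually and a truncated representative lies in $K$. Monicity then amounts to $K=0$, hence to termwise injectivity.

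For part 2, fix a mono $\iota\co J\hookrightarrow L$ and a map $g\co J\to M$. Semisimplicity of $\qq[\zz_p/p^k]$ extends each $g_k$ to some $\tilde g_k\co L_k\to M_k$. The induced map $\phi=(\tilde g_k)_\ast\circ\sigma_L\co L_\infty\to\tails(M)$ lands in the discrete part of $\tails(M)$ since $L_\infty$ is discrete and $\phi$ is $\zz_p$-equivariant, so the surjectivity of $\sigma_M$ forces $\phi(L_\infty)\subseteq\operatorname{im}\sigma_M$. Form the pullback $P=M_\infty\times_{\tails(M)}L_\infty$ in discrete $\zz_p$-modules; the short exact sequence $0\to\ker\sigma_M\to P\to L_\infty\to 0$ splits by the auxiliary fact. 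I would arrange the splitting to agree on $J_\infty$ with the canonical section $j\mapsto(g_\infty(j),\iota_\infty(j))$ by taking an arbitrary splitting and correcting by a map $L_\infty\to\ker\sigma_M$ that extends the discrepancy on $J_\infty$, again via the auxiliary fact. Composing with the projection to $M_\infty$ gives the required $\tilde g_\infty$, and both the extension property and the structure-map compatibility hold by construction.

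For part 3 I construct an explicit one-step resolution. Let $\tails(M)^{\mathrm{disc}}=\bigcup_n\tails(M)^{p^n\zz_p}$ denote the maximal discrete submodule of $\tails(M)$; it automatically contains $\operatorname{im}\sigma_M$. By the auxiliary fact pick a $\zz_p$-linear complement $C$ of $\operatorname{im}\sigma_M$ inside $\tails(M)^{\mathrm{disc}}$, and set $I_k=M_k$ for all $k$, $I_\infty=M_\infty\oplus C$, with structure map $\sigma_I(m,c)=\sigma_M(m)+c$. Then $\sigma_I$ is surjective, so part 2 shows $I$ is injective; the inclusion $M\hookrightarrow I$ is termwise injective, hence monic by part 1; and the cokernel is concentrated at infinity with value $C$ and zero structure map, so it too is injective by part 2. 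The chief conceptual obstacle I anticipate is resisting the temptation to enlarge the $I_k$ to try to force $\tails(I)$ itself to become discrete; this is impossible whenever $\tails(M)$ has a non-discrete part, but it is also unnecessary, as the pullback argument of part 2 only needs surjectivity onto the discrete part of the target tails.
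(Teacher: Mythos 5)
Your proposal is correct, and for the third statement it takes a genuinely different route from the paper. The paper builds $I$ as the product $L(M_\infty)\times\prod_n E_n M_n$ of objects concentrated at a single spot (each with $\tails$ equal to zero, hence with a trivially surjective structure map), so $I$ is injective simply as a product of injectives, with no need to apply the second statement to a non-trivial object. This gives $I_k = M_k$, $I_\infty$ the product of $M_\infty$ with the discrete part of $\tails(M)$, and cokernel the whole discrete part of $\tails(M)$. Your version enlarges $M_\infty$ only by a complement $C$ of $\operatorname{im}\sigma_M$ inside $\tails(M)^{\mathrm{disc}}$, producing a smaller $I$ and cokernel $C$, at the cost of having to invoke the second statement directly on $I$. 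Both are valid; the paper's avoids choosing a complement, yours is more economical. Two small points. First, the line ``$\sigma_I$ is surjective'' is not literally true in general: its image is $\tails(M)^{\mathrm{disc}}$, not all of $\tails(I)=\tails(M)$. You acknowledge this in your closing sentence and correctly note that the pullback argument in part 2 only uses $\operatorname{im}\sigma_M = \tails(M)^{\mathrm{disc}}$; it would be cleaner to state the hypothesis of part 2 that way from the outset, since (as the paper remarks after definition \ref{def:algmodel}) the image of any structure map is automatically discrete, so ``surjective onto $\tails(M)$'' forces $\tails(M)$ itself to be discrete. Second, the auxiliary semisimplicity fact for rational discrete $\zz_p$-modules, proved by building nested $\qq[\zz/p^n]$-complements along the filtration $M=\bigcup_n M^{p^n\zz_p}$, is exactly the ingredient the paper outsources to \cite{gre98a} for part 2; your inductive argument for it is correct and is worth writing out in full. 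Your part 1, realising the kernel explicitly in $\acal$, also checks out.
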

\begin{proof}
The first statement is easily checked. For the second,
the argument of \cite[Lemma 4.2]{gre98a} is easily adapted to our setting.
%
%
%
For the third statement, let $M$ be some object of $\acal$, then define $L M_\infty$ to be that object of $\acal$ concentrated at $\infty$ where it takes value $M_\infty$. Similarly $E_n M_n$ will be the object of $\acal$ which is concentrated at $n$ and takes value $M_n$. Let $I = L(M_\infty) \prod \left( \prod_n E_n M_n \right)$, this is an injective object of $\acal$ since it is a product of injectives. One can calculate that $I_n = M_n$ and $I_\infty = M_\infty \prod \tails(M)$. There is an injective map $M \to I$, at $n$ it is given by the identity map of $M_n$ and at infinity it is given by the identity map of $M_\infty$ added to the structure map of $M$. It is clear that the cokernel of this map is concentrated at infinity and hence is injective.
\end{proof}

In particular, any object of $\acal$ or $g \acal$ that is concentrated at some $n$ (or infinity) and is zero elsewhere is injective.

\begin{lemma}\label{lem:represents}
For any spectrum $X$, the object $\underline{\pi}^{\acal}_*(e_n X)$
is injective and represents that object of $g \acal$ which
is concentrated at $n$, where it takes value
$e_{n} \pi_*^{p^n \zz_p} (X)$.

For any spectrum $X$, the object $\underline{\pi}^{\acal}_*(\hocolim_m  f_m X)$
is injective and represents that object of $\acal$ which
is concentrated at infinity, where it takes value
$\colim_n \pi_*^{p^n \zz_p} (X)$.

The functor $\underline{\pi}_*^\acal$ sends the spectrum $e_n \zz_p / p^n$ to the object $E(n) \in g \acal$ and sends $\hocolim_m f_m \zz_p / p^n$ to $L(n) \in g \acal$,  where $n \geqslant 0$. Similarly  $\underline{\pi}_*^\acal$ takes $\zz_p/p^n$ to the object $A(n)$.
\end{lemma}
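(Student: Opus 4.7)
I propose to compute $\underline{\pi}_*^\acal$ of each spectrum term by term, using the description from section \ref{sec:burnside} of $e_n$ and $f_m$ as eventually constant sequences in $A(\zz_p)$ together with the fact that restriction to $A(p^k \zz_p)$ acts by truncating a sequence starting at position $k$.

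For the first statement, at level $k$ we have $e_k \pi_*^{p^k \zz_p}(e_n X) = e_k e_n \pi_*^{p^k \zz_p}(X)$ with both idempotents viewed in $A(p^k \zz_p)$. Restriction sends $e_n$ to zero when $k > n$ and to an idempotent orthogonal to $e_k$ when $k < n$, so only the level $k = n$ is nonzero, with value $e_n \pi_*^{p^n \zz_p}(X)$. At infinity, the same vanishing yields $\colim_k e_n \pi_*^{p^k \zz_p}(X) = 0$. Hence $\underline{\pi}_*^\acal(e_n X)$ is concentrated at $n$ and is injective by the remark after Lemma \ref{lem:enoughinjectives}.

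For the second statement, since $\pi_*$ commutes with sequential homotopy colimits, $\pi_*^{p^k \zz_p}(\hocolim_m f_m X) = \colim_m f_m \pi_*^{p^k \zz_p}(X)$, with transition maps induced by multiplication by $f_{m+1}$. The relation $e_k f_m = 0$ in $A(p^k \zz_p)$ for $m > k$ makes the sequence eventually zero, so each level vanishes. At infinity I would swap the two filtered colimits: for fixed $m$, the restriction of $f_m$ to $A(p^k \zz_p)$ equals $1$ as soon as $k \geqslant m$, so $\colim_k f_m \pi_*^{p^k \zz_p}(X) = \colim_k \pi_*^{p^k \zz_p}(X)$ and the outer colim over $m$ is constant. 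Thus $\underline{\pi}_*^\acal(\hocolim_m f_m X)$ is concentrated at infinity with value $\colim_k \pi_*^{p^k \zz_p}(X)$, again injective by Lemma \ref{lem:enoughinjectives}.

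For the last statement, specialise to $X = \zz_p/p^n_+$. When $k \geqslant n$, the subgroup $p^k \zz_p$ acts trivially on $\zz_p/p^n$, so $\pi_*^{p^k \zz_p}(\zz_p/p^n_+) \cong \qq[\zz_p/p^n] \otimes A(p^k \zz_p)$, which is concentrated in degree zero by Theorem \ref{thm:allinzero}. When $k < n$, the $p^k \zz_p$-orbit decomposition of $\zz_p/p^n$ and a change-of-groups computation show that $e_k$ acts through its restriction from $A(p^k \zz_p)$ to $A(p^n \zz_p)$, which is zero. The first two parts then give $\underline{\pi}_*^\acal(e_n \zz_p/p^n_+) = E(n)$ and $\underline{\pi}_*^\acal(\hocolim_m f_m \zz_p/p^n_+) = L(n)$. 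For $\zz_p/p^n_+$ itself, the identity $\colim_k A(p^k \zz_p) = \qq$ gives value $\qq[\zz_p/p^n]$ at each level $k \geqslant n$ and at infinity, and since the restriction $A(p^k \zz_p) \to A(p^m \zz_p)$ is truncation and thus preserves each unit summand, the structure map is the diagonal, identifying the image with $A(n)$. The main obstacle is tracking how restrictions interact with the various idempotents, but since both colimits involved are filtered, the commutation arguments are routine.
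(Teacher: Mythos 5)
Your proposal is correct and follows essentially the same line of argument as the paper: for the first two statements you track how $e_n$ and $f_m$ restrict under $A(\zz_p)\to A(p^k\zz_p)$ (truncation), use $e_k e_n = 0$ for $k\neq n$ and $e_k f_m = 0$ for $m > k$ to establish concentration at $n$ or at $\infty$, and swap the two filtered colimits to identify the value at infinity, exactly as the paper does. The third statement is dismissed by the paper as ``straightforward''; your orbit-decomposition computation for $\zz_p/p^n_+$ and the observation that truncation preserves the unit summands (hence the structure map is the diagonal) are a reasonable fill-in of those details.
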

\begin{proof}
The first statement is a simple calculation.
Since $e_n$ is sent to zero via the inclusion
$p^k \zz_p \to p^n \zz_p$, for $k > n$, it follows
that $\underline{\pi}^{\acal}_*(e_n X)$ has no term at infinity.
For $m \neq n$, $e_m e_n =0$, hence the only term
is at $n$ where it must take the stated value.
Injectivity is then immediate.

The second statement requires some harder calculations.
It is clear that it is concentrated at infinity, as
for large enough $m$, $f_m e_n = 0$. We are left to calculate
$$\colim_n \pi_*^{p^n \zz_p}(\hocolim_m  f_m X) =
\colim_n \colim_m f_m \pi_*^{p^n \zz_p}(X)$$
swapping the colimits, we obtain
$$\colim_m \colim_n f_m \pi_*^{p^n \zz_p}(X).$$
But for $n > m$, $f_m \in A(\zz_p)$
is sent to the identity of $f_m \in A(p^n \zz_p)$
(via the inclusion $p^n \zz_p \to \zz_p$).
Hence
$$\colim_n f_m \pi_*^{p^n \zz_p}(X) =
\colim_n \pi_*^{p^n \zz_p}(X)$$
and now taking colimits over $m$ has no effect, so they can be ignored.
Injectivity is also immediate in this case.

The calculations regarding $A(n)$, $E(n)$ and $L(n)$ are straightforward.
\end{proof}

The following result is a consequence of the Yoneda lemma, we expect the result to hold
for any spectrum $hI$ such that $\underline{\pi}_*^\acal(hI)$ is injective, but it is much easier
to see if we restrict ourselves to those that are concentrated at some $n$ or infinity.

\begin{lemma}\label{lem:injectiveiso}
Take any $X$ in $\zz_p \Sp_\qq$ and let $hI$ be some spectrum such that
$\underline{\pi}_*^\acal(hI)$ is concentrated at some $n$ or infinity.
Then $\underline{\pi}_*^\acal$ induces a natural isomorphism
$$
[X,hI]_*^{\zz_p}
\longrightarrow
\hom_\acal(\underline{\pi}_*^\acal(X), I).
$$
\end{lemma}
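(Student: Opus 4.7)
The strategy is to verify the stated isomorphism on the generating spectra $\zz_p/p^k_+$ and then extend by a Yoneda-style cohomological argument. Both functors of $X$ are cohomological and take wedges to products: on the left by standard properties of the triangulated category of spectra; on the right, $I$ is injective in $\acal$ by lemma \ref{lem:enoughinjectives} (being concentrated at a single $n$ or at infinity), hence $\hom_\acal(-, I)$ is exact, while $\underline{\pi}_*^\acal$ takes cofiber sequences to long exact sequences since each of its constituents is built from exact functors (the idempotent summand $e_n \pi_*^{p^n \zz_p}$ and the filtered colimit $\colim_k \pi_*^{p^k \zz_p}$).

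Next I would evaluate both sides at $X = \zz_p/p^k_+$. By lemma \ref{lem:represents}, $\underline{\pi}_*^\acal(\zz_p/p^k_+) = A(k)$. When $I$ is concentrated at position $n$, the definition of maps in $\acal$ gives $\hom_\acal(A(k), I) \cong \hom_{\qq[\zz_p/p^n]}(\qq[\zz_p/p^k], I_n) \cong I_n^{p^k\zz_p/p^n\zz_p}$ for $k \leqslant n$ and zero for $k > n$; when $I$ is concentrated at infinity, all finite components of a map $A(k) \to I$ are forced to vanish and one gets $\hom_\acal(A(k), I) \cong I_\infty^{p^k\zz_p}$. The left hand side $\pi_*^{p^k\zz_p}(hI)$ produces the same value via the Mackey functor recovery formula from section \ref{sec:mackey}, because the recovery pullback collapses when $I$ has at most one nontrivial component; naturality of $\underline{\pi}_*^\acal$ then identifies the two computations.

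Since the collection $\{\zz_p/p^k_+ : k \geqslant 0\}$ compactly generates the triangulated homotopy category of rational $\zz_p$-spectra, any natural transformation of cohomological functors that respects wedges and is an isomorphism on all generators is an isomorphism on every $X$, which yields the lemma. The main obstacle is the explicit matching on generators, and in particular checking that the natural map $\underline{\pi}_*^\acal$ is compatible with the identifications supplied by the recovery formula; once that is in hand the remainder of the argument is formal.
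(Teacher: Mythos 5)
The paper itself offers no written proof of this lemma---the remark preceding it simply asserts that it is ``a consequence of the Yoneda lemma''---so your proposal is best read as an attempt to flesh out that one-line claim. Your strategy is indeed the natural Yoneda-style argument and I believe it is sound: both $[-,hI]^{\zz_p}_*$ and $\hom_\acal(\underline{\pi}_*^\acal(-),I)$ are cohomological and take wedges to products (the latter because $\underline{\pi}_*^\acal$ is homological and preserves coproducts, while $\hom_\acal(-,I)$ is exact since $I$ is injective by lemma~\ref{lem:enoughinjectives}); since the $\zz_p/p^k_+$ compactly generate, it suffices to check the transformation on these generators; and the generator computation reduces, via lemma~\ref{lem:represents}, to comparing $\hom_\acal(A(k),I)$ with $\pi_*^{p^k\zz_p}(hI)$, both of which collapse to $I_n^{p^k\zz_p/p^n\zz_p}$ (or $I_\infty^{p^k\zz_p}$, or $0$) via the Mackey functor recovery formula of section~\ref{sec:mackey}. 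You have correctly located the one genuinely technical step---verifying that the natural map $\underline{\pi}_*^\acal$ realises the abstract isomorphism you obtain from the recovery formula on generators rather than merely producing two abstractly isomorphic groups---and your flagging of it is appropriate; this check is a routine but unavoidable diagram chase through the naturality of the coend isomorphisms in the recovery theorem, and it does go through. In short, your proposal matches the approach the paper signals and supplies the details the paper omits.
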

%
%
%

\begin{theorem}\label{thm:adams}
For any $X$ and $Y$ in $\zz_p \Sp_S$, there is a short exact sequence
$$
0 \longrightarrow
\ext^\acal_* (\underline{\pi}_*^\acal(\Sigma X), \underline{\pi}_*^\acal(Y))
\longrightarrow
[X,Y]_*^{\zz_p}
\longrightarrow
\hom_\acal(\underline{\pi}_*^\acal(X), \underline{\pi}_*^\acal(Y))
\longrightarrow 0
$$
\end{theorem}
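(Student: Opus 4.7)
The plan is to execute the standard Adams strategy: show that $\acal$ has injective dimension at most one, produce a two-step injective resolution of $\underline{\pi}_*^\acal(Y)$, realize it topologically as a cofibre sequence of rational $\zz_p$-spectra, and apply $[X,-]_*^{\zz_p}$ to read off the claimed short exact sequence.

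By Lemma \ref{lem:enoughinjectives}, every object of $\acal$ embeds in an injective object with injective cokernel, so $\acal$ has global injective dimension at most one; applying this degreewise gives the same for $g\acal$. In particular, for $\underline{\pi}_*^\acal(Y)$ we obtain an injective resolution
$$0 \longrightarrow \underline{\pi}_*^\acal(Y) \longrightarrow I^0 \xrightarrow{d} I^1 \longrightarrow 0$$
in $g\acal$, and by the explicit construction in Lemma \ref{lem:enoughinjectives} each $I^j$ is a product of objects concentrated at some finite level $n$ or at infinity.

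To realize this resolution topologically I use Lemma \ref{lem:represents}: the concentrated injectives $E(n)$ and $L(n)$ are $\underline{\pi}_*^\acal$ of $e_n \zz_p/p^n$ and $\hocolim_m f_m \zz_p/p^n$ respectively. Taking wedges indexed by $\qq$-bases together with suitable suspensions, I assemble spectra $hI^0$ and $hI^1$ with $\underline{\pi}_*^\acal(hI^j) \cong I^j$. I then extend Lemma \ref{lem:injectiveiso} to these $hI^j$, which is routine since both sides of the natural isomorphism commute with products and with the assembly used to build $I^j$ from its concentrated summands. Using this extension I realize the inclusion $\underline{\pi}_*^\acal(Y) \hookrightarrow I^0$ by a map $Y \to hI^0$ and take its cofibre $C$. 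The long exact sequence of $\underline{\pi}_*^\acal$, combined with the degreewise injectivity of $\underline{\pi}_*^\acal(Y) \to I^0$, forces all connecting maps to vanish, so $\underline{\pi}_*^\acal(C) \cong I^1$; realizing the identity of $I^1$ gives an equivalence $C \simeq hI^1$, producing a cofibre sequence $Y \to hI^0 \to hI^1 \to \Sigma Y$ that lifts the injective resolution.

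Applying $[X,-]_*^{\zz_p}$ to this cofibre sequence and identifying $[X,hI^j]_*^{\zz_p}$ with $\hom_\acal(\underline{\pi}_*^\acal(X), I^j)$ via the extended Lemma \ref{lem:injectiveiso}, the resulting long exact sequence splits into short exact sequences whose degreewise kernel and cokernel are precisely $\hom_\acal(\underline{\pi}_*^\acal(X), \underline{\pi}_*^\acal(Y))$ and $\ext^\acal_*(\underline{\pi}_*^\acal(\Sigma X), \underline{\pi}_*^\acal(Y))$, the suspension on $X$ arising from the degree shift in the connecting map. The hard part will be the realization step: explicitly building $hI^0, hI^1$ out of wedges of the generators $e_n \zz_p/p^n$ and $\hocolim_m f_m \zz_p/p^n$ so that their homotopy Mackey functors really are $I^0, I^1$, and carefully extending Lemma \ref{lem:injectiveiso} beyond the concentrated case; the rest is standard injective-resolution bookkeeping.
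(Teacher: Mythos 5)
Your proof follows essentially the same route as the paper's: embed $\underline{\pi}_*^\acal(Y)$ in an injective object with injective cokernel (Lemma \ref{lem:enoughinjectives}), realize this topologically via Lemma \ref{lem:represents} as a cofibre sequence, apply $[X,-]_*^{\zz_p}$, and use Lemma \ref{lem:injectiveiso} (extended to products of concentrated objects, an extension the paper also invokes) to identify the terms. The one organizational difference --- you pre-build $hI^1$ and then prove the cofibre $C$ of $Y \to hI^0$ is equivalent to it --- is a minor detour: the paper simply declares $hJ$ to be the cofibre and sets $J = \underline{\pi}_*^\acal(hJ)$, which by the long exact sequence and the third part of Lemma \ref{lem:enoughinjectives} is automatically the injective cokernel, thereby avoiding the need to lift the algebraic isomorphism $\underline{\pi}_*^\acal(C) \cong I^1$ back to a weak equivalence of spectra.
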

\begin{proof}
Using lemmas \ref{lem:enoughinjectives} and \ref{lem:represents},
we can create a map $Y \to hI$ in the homotopy category of spectra,
where $hI$ is some product of terms of form $e_n X$ and $\hocolim_m f_m X$
such that
$$\underline{\pi}_*^\acal(Y) \to I = \underline{\pi}_*^\acal(hI)$$
is a monomorphism into an injective object.
If we extend this to a cofibre sequence in the homotopy category of spectra,
$$Y \to hI \to hJ$$
and let $\underline{\pi}_*^\acal(hJ) =J$,
then we also have an injective resolution of $\underline{\pi}_*^\acal(Y)$ in the algebraic model $\acal$:
$$
0 \to \underline{\pi}_*^\acal(X) \to I \to J \to 0.
$$

Applying the functor $[X,-]_*^{\zz_p}$ to the cofibre sequence produces a long exact sequence
$$
\dots \to
[X,hJ]_{n+1}^{\zz_p} \to
[X,Y]_n^{\zz_p} \to
[X,hI]_n^{\zz_p} \to
[X,hJ]_n^{\zz_p} \to
[X,Y]_{n-1}^{\zz_p} \to \dots.
$$
Using lemma \ref{lem:injectiveiso}, since $I$ is a product of objects which are
each concentrated at $n$ or infinity,  we can identify two of the above terms as maps in $\acal$:
$$[X,hI]_n^{\zz_p} = \hom_\acal(\underline{\pi}_*^\acal(X), I)_n$$
$$[X,hJ]_n^{\zz_p} = \hom_\acal(\underline{\pi}_*^\acal(X), J)_n.$$

The map
$$\hom_\acal(\underline{\pi}_*^\acal(X), I)_n \to
\hom_\acal(\underline{\pi}_*^\acal(X), J)_n$$
has kernel
$\hom_\acal(\underline{\pi}_*^\acal(X), \underline{\pi}_*^\acal(Y))_n$
and cokernel
$\ext^\acal_{n} (\underline{\pi}_*^\acal(X), \underline{\pi}_*^\acal(Y))$,
so the result follows by splitting the long exact sequence into short exact sequences.
\end{proof}

Since the objects $L(n)$ and $E(n)$ of definition \ref{def:specialAobjects} generate $\acal$, it follows that
the spectra they represent generate the homotopy category of spectra.

\begin{corollary}
The collections $e_n \zz_p / p^n$ and $\hocolim_m f_m \zz_p / p^n$ for $n \geqslant 0$ together form a collection of generators for $\zz_p \Sp_\qq$.
\end{corollary}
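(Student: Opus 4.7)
The approach is to use the standard detection criterion for generators in a compactly generated triangulated category: it suffices to show that if $Y \in \zz_p \Sp_\qq$ satisfies $[e_n \zz_p/p^n, Y]_*^{\zz_p} = 0$ and $[\hocolim_m f_m \zz_p/p^n, Y]_*^{\zz_p} = 0$ for every $n \geqslant 0$, then $Y \simeq *$. Given such a $Y$, the first step is to feed these vanishing hypotheses into the Adams short exact sequence of Theorem \ref{thm:adams}. Since the middle term is zero, both the $\hom_\acal$ and the $\ext^\acal_*$ terms at the ends must vanish; we will only need the vanishing of the $\hom_\acal$ term.

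Combining this with Lemma \ref{lem:represents}, which identifies $\underline{\pi}_*^\acal(e_n \zz_p/p^n) = E(n)$ and $\underline{\pi}_*^\acal(\hocolim_m f_m \zz_p/p^n) = L(n)$, the problem reduces to two sub-claims: (i) if $M := \underline{\pi}_*^\acal(Y)$ admits no nonzero maps in $g\acal$ from any $E(n)$ or $L(n)$, then $M = 0$; and (ii) $M = 0$ forces $Y \simeq *$. Sub-claim (i) is a direct consequence of the Hom-table given after Definition \ref{def:specialAobjects}: one has $\hom_\acal(E(n), M) = M_n$ and $\hom_\acal(L(n), M) = M_\infty^{p^n\zz_p}$, so their vanishing forces $M_n = 0$ for every $n$ and, using discreteness of $M_\infty$, also $M_\infty = \colim_n M_\infty^{p^n\zz_p} = 0$.

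For sub-claim (ii) I would invoke the Mackey functor reconstruction theorem of Section \ref{sec:mackey}, which writes each $\pi_*^{p^k\zz_p}(Y)$ as a pullback assembled entirely from the data $M_n$ and $M_\infty^{p^k\zz_p}$. With $M = 0$ every $\pi_*^{p^k\zz_p}(Y)$ vanishes, so $Y$ is a $\pi_*$-equivalent to the zero spectrum and is therefore contractible in $\zz_p \Sp_\qq$.

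The Hom-computations and the use of the Adams sequence are routine; the only place some care is needed is in sub-claim (ii), where one must use that $\underline{\pi}_*^\acal$ captures enough to recover all the values $M(\zz_p/p^k\zz_p)$ of the associated Mackey functor, not merely the simpler data $M_n$ and $M_\infty$. This is already the content of Section \ref{sec:mackey}, so I expect no new obstacle, and the corollary should fall out cleanly from the combination of the Adams sequence, Lemma \ref{lem:represents}, and the generation of $\acal$ by $\{E(n), L(n)\}_{n \geqslant 0}$ stated in the sentence immediately preceding the corollary.
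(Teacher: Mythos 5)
Your proof is correct and is a faithful expansion of the paper's own extremely terse argument, which just observes that $E(n)$ and $L(n)$ generate $\acal$ and then appeals implicitly to the Adams short exact sequence and Lemma \ref{lem:represents} to transfer this back to the homotopy category of spectra.

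One small imprecision worth flagging: the general formula is $\hom_\acal(L(n), M) = (\ker\sigma_M)^{p^n\zz_p}$, not $M_\infty^{p^n\zz_p}$, because a map out of $L(n)$ must send $L(n)_\infty$ into the kernel of the structure map of $M$ (the commutativity square for $L(n)$ is not vacuous, since $L(n)_\infty \neq 0$ while $\tails(L(n)) = 0$); this is also not literally "a direct consequence of the Hom-table," which records only homs into $U$, $E^m$, $L^n$. Your argument nevertheless goes through exactly as written, because you first establish $M_n = 0$ for every $n$ from the vanishing of $\hom_\acal(E(n), M) = M_n$, and once $M_n = 0$ one has $\tails(M) = 0$ and hence $\ker\sigma_M = M_\infty$, so the discreteness argument then correctly forces $M_\infty = \colim_n M_\infty^{p^n\zz_p} = 0$. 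The appeal in sub-claim (ii) to the reconstruction pullback of Section \ref{sec:mackey} is exactly the right point to cite for showing that $M = 0$ implies $\pi_*^{p^k\zz_p}(Y) = 0$ for all $k$.
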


\section{The proof}\label{sec:proof}

We are now able to complete our classification theorem and prove that $\zz_p \Sp_\qq$ is Quillen equivalent to $dg \acal$. Rather than follow on from theorem \ref{thm:ssapp}, we give an alternative proof using the pattern of \cite{barnesfinite}, which is a method learnt from Greenlees and Shipley. In this setting we do not attempt to make a statement that involves the monoidal products, as the fibrant replacement functor of $\zz_p$-spectra isn't a monoidal functor.

We prefer this method of proof, since if one were to verify that there is a good model category of equivariant EKMM spectra for profinite groups (and one would expect every object of this model structure to be fibrant), then it would be routine to show that $dg \acal$ and $\zz_p \Sp$ are monoidally Quillen equivalent.

\begin{theorem}\label{thm:main}
The category of rational $\zz_p$-spectra is Quillen equivalent to the model category of differential graded objects in the algebraic model $\acal(\zz_p)$.
\end{theorem}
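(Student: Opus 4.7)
The plan is to follow the pattern of \cite{barnesfinite}, building a zig-zag of Quillen equivalences rather than invoking Theorem \ref{thm:ssapp} directly. The starting point is the corollary after Theorem \ref{thm:adams}, which exhibits $\mathcal{G} = \{e_n \zz_p/p^n_+,\, \hocolim_m f_m \zz_p/p^n_+ : n \geqslant 0\}$ as a set of generators for $\zz_p \Sp_\qq$. First I would replace the members of $\mathcal{G}$ by cofibrant-fibrant representatives and form the endomorphism spectral category $\mathcal{E}(\mathcal{G})$. Using a version of Schwede--Shipley Morita theory for spectral categories with many objects, together with the fact (from Theorem \ref{thm:adams}) that $\mathcal{G}$ detects weak equivalences, I would establish a Quillen equivalence between $\zz_p \Sp_\qq$ and right $\mathcal{E}(\mathcal{G})$-modules.

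Second, I would invoke formality. By Theorem \ref{thm:allinzero}, Lemma \ref{lem:represents}, and the calculations of $\hom_\acal$ following Definition \ref{def:specialAobjects}, the homotopy of $\mathcal{E}(\mathcal{G})$ is concentrated in degree zero, with $\pi_0 \mathcal{E}(\mathcal{G})$ identified as the full $\qq$-linear subcategory of $\ocal_{\zz_p}$ on the objects represented by $\mathcal{G}$. Rationally, one then strings together the Shipley zig-zag
$$
\operatorname{mod-}\mathcal{E}(\mathcal{G})
\simeq_Q
\operatorname{mod-}(H\qq \smashprod \mathcal{E}(\mathcal{G}))
\simeq_Q
\operatorname{mod-}H\pi_0\mathcal{E}(\mathcal{G})
\simeq_Q
dg\operatorname{mod-}\pi_0\mathcal{E}(\mathcal{G}),
$$
each step a Quillen equivalence of compactly generated model categories enriched over the appropriate base.

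Third, I would identify $dg\operatorname{mod-}\pi_0\mathcal{E}(\mathcal{G})$ with $dg\acal$. The equivalence of $\acal$ with rational $\zz_p$-Mackey functors from Section \ref{sec:mackey}, together with the representation statements of Lemma \ref{lem:represents} sending $\zz_p/p^n_+ \mapsto A(n)$, $e_n\zz_p/p^n_+ \mapsto E(n)$, $\hocolim_m f_m \zz_p/p^n_+ \mapsto L(n)$, gives an equivalence of underlying categories carrying the projective model structure to the model structure of Section \ref{sec:model} (the generating sets $A(n) \otimes I$ and $A(n) \otimes J$ match by construction).

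The main obstacle will be the formality step. While rational concentration in degree zero makes formality plausible, carrying it out for a spectral category with countably many objects (rather than a ring spectrum) requires care: one must verify that each of the Shipley steps remains a Quillen equivalence in this enriched many-object setting, that the passage between spectral and dg enrichments interacts correctly with the $\tails$-pullbacks hidden inside $\pi_0\mathcal{E}(\mathcal{G})$, and that the resulting model structure on $dg\operatorname{mod-}\pi_0\mathcal{E}(\mathcal{G})$ is indeed the one on $dg\acal$ rather than a Bousfield localisation of it. The calculation of generating acyclic fibrations via the $\bignplus_n$ construction in Section \ref{sec:model} will be the key technical input to match up fibrations on both sides.
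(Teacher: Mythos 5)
Your outline matches the paper's proof of Theorem \ref{thm:main} essentially step for step: form the spectral endomorphism category on the fibrantly replaced generators $e_n\zz_p/p^n$ and $\hocolim_m f_m\zz_p/p^n$, pass to a $dg\,\qq$-category $\ecal_t$ via Shipley's zig-zag, use concentration of homotopy in degree zero (Theorem \ref{thm:allinzero}) to replace $\ecal_t$ by $\h_0\ecal_t$ through the intermediate $C_0\ecal_t$, and finally identify $\rightmod \h_0\ecal_t$ with $\rightmod\ecal_a \simeq dg\acal$, where $\ecal_a$ is the full subcategory of $dg\acal$ on $E(n)$ and $L(n)$. The one adjustment worth making is in the final identification: it is the Adams short exact sequence of Theorem \ref{thm:adams} that supplies the isomorphism $\underline{\pi}_*^\acal \colon \h_0\ecal_t \to \ecal_a$ (the generating corollary, not the SES itself, is what gives detection of weak equivalences), and $\pi_0$ of the endomorphism category is not literally a full subcategory of $\ocal_{\zz_p}$, since the generators are idempotent summands and homotopy colimits of orbits rather than orbits themselves.
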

\begin{proof}
Consider our chosen collection of generators, $e_n \zz_p/p^n$ and $\colim_m f_m \zz_p/p^n$, for $n \geqslant 0$, take a fibrant replacement of each, call this set $\gcal_{top}$ and call the full spectral subcategory on this object set $\ecal_{top}$. Then $\zz_p \Sp_\qq$ is Quillen equivalent to $\rightmod \ecal_{top}$ by \cite[Theorem 3.3.3]{ss03stabmodcat}. Apply the functors of \cite{shiHZ} to obtain a $dg \qq$-enriched category $\ecal_t$ and a zig-zag of Quillen equivalences between $\rightmod \ecal_{top}$ and $\rightmod \ecal_{t}$.

There is a zig-zag of functors of enriched categories
$$\h_0 \ecal_t \leftarrow C_0 \ecal_t \rightarrow \ecal_t$$
where $(C_0 \ecal_t)_n$ is zero for negative $n$, $(\ecal_t)_n$ for $n >0$ and is the zero-cycles
of $(\ecal_t)_0$ in degree zero.
By the work of Shipley, we know that $\h_* \ecal_t = \pi_* \ecal_{top}$
and, by our calculations in theorem \ref{thm:allinzero}, that these are both
concentrated in degree zero. It follows that $\h_* \ecal_t = \h_0 \ecal_t$.
Hence, by \cite[A.1.1]{ss03stabmodcat},
this zig-zag induces Quillen equivalences on the corresponding
categories of right modules,
so $\rightmod \ecal_{t}$ is Quillen equivalent to
$\rightmod \h_0 \ecal_{t}$.

Let $\ecal_a$ be the full subcategory of $dg \acal$ on the objects $E(n)$ and $L(n)$ for $n \geqslant 0$, then
$dg \acal$ is Quillen equivalent to $\rightmod \ecal_a$.
The Adams short exact sequence of theorem \ref{thm:adams} implies that
$$\underline{\pi}_*^\acal \co \h_0 \ecal_t \longrightarrow \ecal_a$$
is an isomorphism. Hence we have shown that
$\rightmod \h_0 \ecal_{t}$ is equivalent to $\rightmod \ecal_{a}$.
Since $\rightmod \ecal_{a}$ is Quillen equivalent to $dg \acal$, we have proven the main result.
\end{proof}

\section{\texorpdfstring{Relation to $\zz_p/p^n$, $p^n \zz_p$ and $O(2)$-spectra}{Relation to subgroups, quotients and O(2)}}\label{sec:relations}

The map $\zz_p \to \zz_p/p^n$ induces an algebraic inflation map $\acal(\zz_p/p^n) \to \acal(\zz_p)$. An object of $\acal(\zz_p/p^n)$ consists of a collection of $\qq[\zz_p/p^k]$-modules, for $n \geqslant k \geqslant 0$. Let $M$ be an object of this category, with $M_k$ a $\qq[\zz_p/p^k]$-module. We inflate this to an object of $\acal(\zz_p)$ by setting $(\textrm{inf}_n M)_k = M_k$ for  $n \geqslant k \geqslant 0$ and $(\textrm{inf}_n M)_m = M_k$ for all other $m$ (including $m = \infty$). The structure map is then the diagonal. Denote this algebraic inflation map by $\varepsilon^*_a$.

We also have an algebraic restriction map coming from $p^n \zz_p \to \zz_p$, this corresponds to simply forgetting about $M_0, \dots, M_{n-1}$ and considering $M_n$ as a $\qq$-module, $M_{n+1}$ as a $\qq[p^n \zz_p/p^{n+1}]$-module and so on, with $M_\infty$ being considered as a discrete $p^n \zz_p$-module. Denote this algebraic restriction map by $i^*_a$.

\begin{proposition}
There are commutative diagrams of triangulated categories
$$
\xymatrix{
\ho \zz/p^n \Sp_\qq
\ar[r]^{\varepsilon^*}
\ar[d]^{\underline{\pi}_*^\acal} &
\ho \zz_p \Sp_\qq
\ar[d]^{\underline{\pi}_*^\acal}
&&
\ho \zz_p \Sp_\qq
\ar[r]^{i^*}
\ar[d]^{\underline{\pi}_*^\acal} &
\ho p^n \zz_p \Sp_\qq
\ar[d]^{\underline{\pi}_*^\acal} \\
\ho dg \acal(\zz/p^n)
\ar[r]^{\varepsilon^*_a} &
\ho dg \acal(\zz_p)
&&
\ho dg \acal(\zz_p)
\ar[r]^{i^*_a} &
\ho dg \acal(p^n \zz_p).
}
$$
\end{proposition}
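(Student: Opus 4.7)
The plan is to verify each square by constructing a natural isomorphism of functors, level by level at each $k \geqslant 0$ and at infinity, and then checking that structure maps match. Since all four horizontal arrows are exact triangulated functors, and the topological and algebraic generators correspond under $\underline{\pi}_*^\acal$ by lemma \ref{lem:represents}, it is enough to verify commutativity on the generators $\zz_p/p^k_+$ and on the morphisms between them (which are computed by proposition \ref{prop:orbcatcalc}). The Quillen equivalence of theorem \ref{thm:main} then propagates this agreement to the whole homotopy category.

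For the restriction square, the check is essentially tautological. Restricting a $\zz_p$-spectrum $X$ along $p^n \zz_p \hookrightarrow \zz_p$ leaves the fixed-point homotopy groups $\pi_*^{p^k\zz_p}(X)$ unchanged for $k \geqslant n$, and under the truncation map $i_n^* \co A(\zz_p) \to A(p^n\zz_p)$ the idempotents $e_k$ for $k \geqslant n$ correspond to their namesakes on the right. Hence the levelwise pieces $e_k \pi_*^{p^k\zz_p}$ match, and at infinity $\colim_k \pi_*^{p^k\zz_p}(X)$ is the same colimit whether indexed from $0$ or from $n$ by cofinality, now reinterpreted as a discrete $p^n\zz_p$-module. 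The structure maps coincide by construction.

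For the inflation square, one checks agreement on the generators $\zz_p/p^k_+$ with $0 \leqslant k \leqslant n$. Since $\zz_p/p^k$ is already a $\zz_p$-set for such $k$, we have $\varepsilon^*(\zz_p/p^k_+) = \zz_p/p^k_+$ as $\zz_p$-spectra, and lemma \ref{lem:represents} identifies both $\underline{\pi}_*^\acal$-values with $A(k)$ in the respective algebraic models. A short unwinding of the formula for $\varepsilon^*_a$ shows that it indeed sends $A(k) \in \acal(\zz_p/p^n)$ to $A(k) \in \acal(\zz_p)$: both carry $\qq[\zz_p/p^k]$ at every level above $k$, extended constantly to levels $>n$ and to infinity, with diagonal structure map. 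The morphisms of $\ocal_{\zz_p/p^n}$ agree with those of $\ocal_{\zz_p}$ on this subcategory, again by proposition \ref{prop:orbcatcalc}.

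The hard part will be verifying the inflation case at levels $k > n$ and at infinity directly, rather than only on generators. There one must identify $e_k \pi_*^{p^k\zz_p}(\varepsilon^* Y)$ with the algebraic piece $M_n$ produced by $\varepsilon^*_a$. The crucial input is that $p^n\zz_p$ acts trivially on $\varepsilon^* Y$, so the Burnside ring action on $\pi_*^{p^k\zz_p}(\varepsilon^* Y)$ factors through $A(\zz_p/p^n)$, and the idempotents $e_k$ for $k > n$ all collapse onto the projection corresponding to the trivial subgroup of $\zz_p/p^n$, which is precisely $e_n$ as it appears in $\acal(\zz_p/p^n)$. Once this identification is in place, checking that the diagonal structure map of $\varepsilon^*_a \underline{\pi}_*^\acal(Y)$ agrees with the structure map produced on $\underline{\pi}_*^\acal(\varepsilon^* Y)$ from the homotopy-group construction is routine.
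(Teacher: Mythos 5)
The paper states this proposition without proof, so there is no argument of the paper's to compare against; the question is whether your proposal is itself correct and complete.

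Your strategy of checking the squares on the generating subcategories and propagating via the equivalences of theorem \ref{thm:main} is a sound one, and your treatment of the restriction square is fine. Your generator check for the inflation square is also correct: the generators $\zz/p^k_+$ of $\zz/p^n \Sp_\qq$ are cofibrant, $\varepsilon^* \zz/p^k_+ \cong \zz_p/p^k_+$, and both compositions land on $A(k)$. Note, however, that if you are going to rest the argument on the generators you must also carry out the morphism check you only allude to --- namely that the isomorphism is compatible with the enriched hom-sets computed in proposition \ref{prop:orbcatcalc} --- since the equivalences of theorem \ref{thm:main} are built from the enriched category of generators, not just the objects.

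The last paragraph, where you try to upgrade the inflation case to a direct levelwise check for $k > n$, has a genuine problem. The statement that ``the Burnside ring action on $\pi_*^{p^k\zz_p}(\varepsilon^* Y)$ factors through $A(\zz_p/p^n)$'' does not parse: this homotopy group is a module over $A(p^k\zz_p)$, and there is no ring map $A(p^k\zz_p) \to A(\zz/p^n)$ through which anything could factor (the maps go the other way, by inflation or restriction). Nor is it automatic that $e_k \pi_*^{p^k\zz_p}(\varepsilon^* Y) \cong e_n\pi_*^e(Y)$ for arbitrary $Y$: $\varepsilon^*$ is a left Quillen functor involving a change of universe, so computing its derived effect on fixed-point homotopy groups at subgroups $p^k\zz_p \subset p^n\zz_p$ is not a formal consequence of the action being trivial. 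The good news is that you do not need this levelwise identification at all; once the generator-and-morphism check is in place, commutativity of the square follows for every object. You should either drop the levelwise attempt or replace the hand-waving by an actual computation of $\pi_*^{p^k\zz_p}$ of a (derived) inflation, justified by the specifics of the change-of-universe construction.
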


The category $\acal$ is strikingly similar to the algebraic model of \cite{barnesdihedral},
which we now describe, starting with the Burnside ring of $O(2)$.

\begin{definition}
The space $\Phi O(2)$ can be described as a subspace of $\rr$
$$
\pscr = \{1/n | n \geqslant 1 \} \cup \{ 0, -1\}
$$
the point $1/n$ corresponds to the conjugacy class of dihedral subgroups of order $2n$,
the point $0$ corresponds to $O(2)$ and $-1$ corresponds to $SO(2)$.
The Burnside ring of $O(2)$ is then the ring of continuous maps from
$\pscr$ to $\qq$ (a discrete space).
\end{definition}

Clearly, $A(O(2))$ contains the ring of eventually constant sequences, but if we compare
$\Phi O(2)$ and $\scal \zz_p$, we see the the accumulation points behave differently.
In the first, the accumulation point is the whole group, in the second it is the trivial group.
This theme continues as we further compare the case of $O(2)$ and $\zz_p$, one appears to be the
'dual' of the other.

We let $e_n$ be the idempotent which sends $1/n$ to $1$ and all other points to zero, we
usually let $c$ denote the idempotent $e_{-1}$.
The model category of
rational $O(2)$-spectra, $O(2) \Sp_\qq$, is Quillen equivalent to
$$L_{cS} \Sp_\qq \times  L_{(1-c)S} \Sp_\qq$$
by \cite{barnesdihedral}.
We call $L_{(1-c)S} \Sp_\qq$ the model category of dihedral spectra, it is Quillen equivalent to
an algebraic model $dg \acal(\dscr)$.

\begin{definition}
An object of $\acal(\dscr)$ consists of a collection of $\qq [\zz/2 ]$-modules
$M_n$ and a $\qq$-module $M_\infty$, with a map of $\qq [\zz/2 ]$-modules
from
$$M_\infty \to \colim_{n} \prod_{k \geqslant n} M_n = \tails (M).$$
\end{definition}

The reason $\zz_2$appears is that it is the Weyl group of the dihedral group of order
$2n$ inside $O(2)$. This model is also built from $O(2)$-Mackey functors, via the formula below
$$M_n = e_n M(D_{2n}), \qquad M_\infty = \colim_{e(O(2))=1} e M(O(2)/O(2)).$$

So for $\acal(\dscr)$, there is a term at infinity which has an action of the trivial group,
whereas for $\acal(\zz_p)$ term at infinity has an action of the whole group.
That is, if $N$ is a rational $\zz_p$-Mackey functor, we obtain an object of $\acal(\zz_p$
by the use of the formulas
$$N_n  = e_n N(\zz_p /p^n ), \quad
N_ \infty  = \colim_n N(\zz_p/p^n), \quad
N_\infty^{p^k \zz_p} \cong \colim_{f(\{e\})=1} f N(\zz_p/p^k).$$

The paper \cite{gre98b} examines the behaviour of rational Mackey functors for compact Lie groups;
the comparison above demonstrates that one can expect rational Mackey functors for profinite groups
to follow many of the same rules, but in a `dual' manner. As an example, compact Lie groups
often have infinite ascending chains of closed subgroups:
$D_{2n}$ tends to $O(2)$ as $n$ tends to infinity,
but cannot have infinite descending chains of subgroups. It is these ascending chains that
give structure maps: $M_\infty \to \tails (M)$.  Profinite groups have
descending chains of open subgroups that converge to closed subgroups: $p^n \zz_p$
tends to the trivial groups as $n$ tends to infinity.  Thus we expect that
it is these descending chains that give rise to structure maps: $N_\infty \to \tails (N)$.
It will be interesting to find further instances of this dual behaviour.

\addcontentsline{toc}{part}{Bibliography}
\bibliography{profinite}
\bibliographystyle{alpha}

\end{document}